\begin{document}
\newtheorem{defn}{Definition}[section]
\newtheorem{thm}{Theorem}[section]
\newtheorem{prop}{Proposition}[section]
\newtheorem{exam}{Example}[section]
\newtheorem{cor}{Corollary}[section]
\newtheorem{rem}{Remark}[section]
\newtheorem{lem}{Lemma}[section]
\newcommand{\CC}{\mathbb{C}}
\newcommand{\KK}{\mathbb{K}}
\newcommand{\ZZ}{\mathbb{Z}}
\def\a{{\alpha}}
\def\b{{\beta}}
\def\d{{\delta}}
\def\g{{\gamma}}
\def\l{{\lambda}}
\def\gg{{\mathfrak g}}
\def\cal{\mathcal }

\title{The classification of Leibniz superalgebras of nilindex $n+m$ ($m\neq0.$)}
\author{J. R. G\'{o}mez, A.Kh. Khudoyberdiyev and B.A. Omirov}
\address{[J.R. G\'{o}mez] Dpto. Matem\'{a}tica Aplicada I.
Universidad de Sevilla. Avda. Reina Mercedes, s/n. 41012 Sevilla.
(Spain)} \email{jrgomez@us.es}
\address{[A.Kh. Khudoyberdiyev -- B.A. Omirov] Institute of Mathematics and
Information Technologies of Academy of Uzbekistan, 29, F.Hodjaev
srt., 100125, Tashkent (Uzbekistan)} \email{khabror@mail.ru ---
omirovb@mail.ru}

\thanks{The first author was supported by the PAI, FQM143 of the Junta de Andaluc\'{\i}a
(Spain) and the last author was supported by grant
NATO-Reintegration ref. CBP.EAP.RIG.983169}

\maketitle

\begin{abstract}
In this paper we investigate the description of the complex
Leibniz superalgebras with nilindex $n+m$, where $n$ and $m$
($m\neq 0$) are dimensions of even and odd parts, respectively. In
fact, such superalgebras with characteristic sequence equal to
$(n_1, \dots, n_k | m_1, \dots, m_s)$ (where $n_1+\dots +n_k=n, \
m_1+ \dots + m_s=m$) for $n_1\geq n-1$ and $(n_1, \dots, n_k | m)$
were classified in works \cite{FilSup}--\cite{C-G-O-Kh1}. Here we
prove that in the case of $(n_1, \dots, n_k| m_1, \dots, m_s)$,
where $n_1\leq n-2$ and $m_1 \leq m-1$ the Leibniz superalgebras
have nilindex less than $n+m.$ Thus, we complete the
classification of Leibniz superalgebras with nilindex $n+m.$
\end{abstract}

\textbf{Mathematics Subject Classification 2000}: 17A32, 17B30,
17B70, 17A70.

\textbf{Key Words and Phrases}: Lie superalgebras, Leibniz
superalgebras, nilindex, characteristic sequence, natural
gradation.

\section{Introduction}

During many years the theory of Lie superalgebras has been
actively studied by many mathematicians and physicists. A
systematic exposition of basic of Lie superalgebras theory can be
found in \cite{Kac}. Many works have been devoted to the study of
this topic, but unfortunately most of them do not deal with
nilpotent Lie superalgebras. In works \cite{2007Yu}, \cite{GL},
\cite{G-K-N} the problem of the description of some classes of
nilpotent Lie superalgebras have been studied. It is well known
that Lie superalgebras are a generalization of Lie algebras. In
the same way, the notion of Leibniz algebras, which were
introduced in \cite{Lod}, can be generalized to Leibniz
superalgebras \cite{Alb}, \cite{Liv}. Some elementary properties
of Leibniz superalgebras were obtained in \cite{Alb}.

In the work \cite{G-K-N} the Lie superalgebras with maximal
nilindex were classified. Such superalgebras are two-generated and
its nilindex equal to $n+m$ (where $n$ and $m$ are dimensions of
even and odd parts, respectively). In fact, there exists unique
Lie superalgebra of maximal nilindex. This superalgebra is
filiform Lie superalgebra (the characteristic sequence equal to
$(n-1,1 | m)$) and we mention about paper \cite{2007Yu}, where
some crucial properties of filiform Lie superalgebras are given.

For nilpotent Leibniz superalgebras the description of the case of
maximal nilindex (nilpotent Leibniz superalgebras distinguished by
the feature of being single-generated) is not difficult and was
done in \cite{Alb}.

However, the description of Leibniz superalgebras of nilindex
$n+m$ is a very problematic one and it needs to solve many
technical tasks. Therefore, they can be studied by applying
restrictions on their characteristic sequences. In the present
paper we consider Leibniz superalgebras with characteristic
sequence $(n_1, \dots, n_k | m_1, \dots, m_s)$ ($n_1\leq n-2$ and
$m_1\leq m-1$) and nilindex $n+m.$ Recall, that such superalgebras
for $n_1\geq n-1$ or $m_1=m$ have been already classified in works
\cite{FilSup}--\cite{C-G-O-Kh1}. Namely, we prove that a Leibniz
superalgebra with characteristic sequence equal to $(n_1, \dots,
n_k | m_1, \dots, m_s)$ ($n_1\leq n-2$ and $m_1\leq m-1$) has
nilindex less than $n+m.$ Therefore, we complete classification of
Leibniz superalgebras with nilindex $n+m.$

It should be noted that in our study the natural gradation of even
part of Leibniz superalgebra played one of the crucial roles. In
fact, we used some properties of naturally graded Lie and Leibniz
algebras for obtaining the convenience basis of even part of the
superalgebra (so-called adapted basis).

Throughout this work we shall consider spaces and (super)algebras
over the field of complex numbers. By asterisks $(*)$ we denote
the appropriate coefficients at the basic elements of
superalgebra.

\section{Preliminaries}

Recall the notion of Leibniz superalgebras.

\begin{defn} A $\mathbb{Z}_2$-graded vector space $L=L_0\oplus
L_1$ is called a Leibniz superalgebra if it is equipped with a
product $[-, -]$ which satisfies the following conditions:

1. $[L_\alpha,L_\beta]\subseteq L_{\alpha+\beta(mod\ 2)},$

2. $[x, [y, z]]=[[x, y], z] - (-1)^{\alpha\beta} [[x, z], y]-$
  Leibniz superidentity,\\
 for all $x\in L,$ $y \in L_\alpha,$ $z \in
L_\beta$ and $\alpha,\beta\in \mathbb{Z}_2.$
\end{defn}

The vector spaces $L_0$ and $L_1$ are said to be even and odd
parts of the superalgebra $L$, respectively. Evidently, even part
of the Leibniz superalgebra is a Leibniz algebra.

Note that if in Leibniz superalgebra $L$ the identity
$$[x,y]=-(-1)^{\alpha\beta} [y,x]$$ holds for any $x \in
L_{\alpha}$ and $y \in L_{\beta},$ then the Leibniz superidentity
can be transformed into the Jacobi superidentity. Thus, Leibniz
superalgebras are a generalization of Lie superalgebras and
Leibniz algebras.

The set of all Leibniz superalgebras with the dimensions of the
even and odd parts, respectively equal to $n$ and $m$, we denote
by $Leib_{n,m}.$

For a given Leibniz superalgebra $L$ we define the descending
central sequence as follows:
$$
L^1=L,\quad L^{k+1}=[L^k,L], \quad k \geq 1.
$$

\begin{defn} A Leibniz superalgebra $L$ is called
nilpotent, if there exists  $s\in\mathbb N$ such that $L^s=0.$ The
minimal number $s$ with this property is called nilindex of the
superalgebra $L.$
\end{defn}

\begin{defn} The set $$\mathcal{R}(L)=\left\{ z\in L\ |\ [L,
z]=0\right\}$$ is called the right annihilator  of a superalgebra
$L.$
\end{defn}

Using the Leibniz superidentity it is easy to see that
$\mathcal{R}(L)$ is an ideal of the superalgebra $L$. Moreover,
the elements of the form $[a,b]+(-1)^{\alpha \beta}[b,a],$ ($a \in
L_{\alpha}, \ b \in L_{\beta}$) belong to $\mathcal{R}(L)$.

The following theorem describes nilpotent Leibniz superalgebras
with maximal nilindex.
\begin{thm} \label{t1} \cite{Alb} Let $L$  be a Leibniz superalgebra
of $Leib_{n,m }$ with nilindex equal to $n+m+1.$ Then $L$ is
isomorphic to one of the following non-isomorphic superalgebras:
$$
[e_i,e_1]=e_{i+1},\ 1\le i\le n-1, \ m=0;\quad \left\{ \begin{array}{ll} [e_i,e_1]=e_{i+1},& 1\le i\le n+m-1, \\
{[}e_i,e_2{]}=2e_{i+2}, & 1\le i\le n+m-2,\\ \end{array}\right.
$$
(omitted products are equal to zero).
\end{thm}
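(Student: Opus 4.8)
The plan is to show that maximal nilindex $n+m+1$ is equivalent to $L$ being singly generated, and then to read off the two families from the parity of the generator.

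First I would analyse the descending central sequence $L=L^1\supseteq L^2\supseteq\cdots$. Because $L$ is nilpotent, every proper inclusion $L^{k+1}\subsetneq L^k$ lowers the dimension by at least one; since $\dim L=n+m$ and the hypothesis gives $L^{n+m}\neq 0$ with $L^{n+m+1}=0$, all $n+m$ successive drops must equal exactly one, so $\dim(L^k/L^{k+1})=1$ for each $k$. In particular $\dim(L/L^2)=1$, hence $L$ is generated by a single homogeneous element. Using the Leibniz superidentity to reduce arbitrary bracketings to left-normed words, every $L^k$ is spanned modulo $L^{k+1}$ by a single left-normed product of that generator; together with the dimension count this produces a basis $e_1,\dots,e_{n+m}$ with $e_k\in L^k$ and $e_{k+1}=[e_k,e_1]$.

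Next I would distinguish the parity of the generator $e_1$. If $e_1\in L_0$, then all iterated brackets of $e_1$ stay in $L_0$, forcing $L_1=0$, i.e. $m=0$, and the left-normed products give precisely $[e_i,e_1]=e_{i+1}$, the first family. If $e_1\in L_1$, which must occur when $m\neq 0$, I would set $e_2=[e_1,e_1]$ and $e_{i+1}=[e_i,e_1]$; the elements then alternate in parity and form the basis above. Applying the Leibniz superidentity to $[e_i,[e_1,e_1]]$ with $e_1$ odd, so that $(-1)^{\alpha\beta}=-1$, yields $[e_i,e_2]=2[[e_i,e_1],e_1]=2e_{i+2}$, the second listed relation.

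The main obstacle is to verify that no further products survive, that is, $[e_i,e_j]=0$ for every $j\geq 3$. Here I would exploit the observation that $[a,b]+(-1)^{\alpha\beta}[b,a]\in\mathcal{R}(L)$. For $j=3$ this gives $[e_2,e_1]+[e_1,e_2]=3e_3\in\mathcal{R}(L)$, hence $e_3\in\mathcal{R}(L)$; then an induction on $j$, combining $e_{j+1}=[e_j,e_1]$ with the membership $[e_j,e_1]+(-1)^{\alpha\beta}[e_1,e_j]\in\mathcal{R}(L)$ and the already established $[e_1,e_j]=0$, shows $e_{j+1}\in\mathcal{R}(L)$ as well. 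Consequently $[e_i,e_j]=0$ for all $i$ and all $j\geq 3$, leaving only the two families. A final routine step checks that these are pairwise non-isomorphic, for instance by comparing $\dim L_1$ and the dimension of $\mathcal{R}(L)$, which completes the classification.
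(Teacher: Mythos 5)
Your proof is correct and takes essentially the approach the paper intends: the paper does not reprove this theorem (it cites \cite{Alb}), recording only your key step --- that nilindex $n+m+1$ is equivalent to being single-generated (Remark 2.1) --- after which the bracket table is forced exactly as you derive it via left-normed reduction, the superidentity applied to $[e_i,[e_1,e_1]]$, and the right-annihilator induction. The only (trivial) omission is in the even-generator case: there you should note that the same computation with sign $+1$ gives $[e_i,e_2]=[[e_i,e_1],e_1]-[[e_i,e_1],e_1]=0$, so $e_2\in\mathcal{R}(L)$ and your induction then yields $[e_i,e_j]=0$ for all $j\ge 2$, which is what pins down the $m=0$ family.
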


\begin{rem} {\em
From the assertion of Theorem \ref{t1} we have that in case of
non-trivial odd part $L_1$ of the superalgebra $L$ there are two
possibility for $n$ and $m$, namely, $m=n$ if $n+m$ is even and
$m=n+1$ if $n+m$ is odd. Moreover, it is clear that the Leibniz
superalgebra has the maximal nilindex if and only if it is
single-generated.}
\end{rem}

Let $L=L_0\oplus L_1$ be a nilpotent Leibniz superalgebra. For an
arbitrary element $x\in L_0,$ the operator of right multiplication
$R_x:L \rightarrow L$ (defined as $R_x(y)=[y,x]$) is a nilpotent
endomorphism of the space $L_i,$ where $i\in \{0, 1\}.$ Taking
into account the property of complex endomorphisms we can consider
the Jordan form for $R_x.$ For operator $R_x$ denote by $C_i(x)$
($i\in \{0, 1\}$) the descending sequence of its Jordan blocks
dimensions. Consider the lexicographical order on the set
$C_i(L_0)$.

\begin{defn} \label{d4}A sequence
$$C(L)=\left( \left.\max\limits_{x\in L_0\setminus L_0^2} C_0(x)\
\right|\ \max\limits_{\widetilde x\in L_0\setminus L_0^2}
C_1\left(\widetilde x\right) \right) $$ is said to be the
characteristic sequence of the Leibniz superalgebra $L.$
\end{defn}

Similarly to \cite{GL} (corollary 3.0.1) it can be proved that the
characteristic sequence is invariant under isomorphism.

Since Leibniz superalgebras from $Leib_{n,m}$ with nilindex $n+m$
and with characteristic sequences equal to $(n_1, \dots, n_k |
m_1, \dots, m_s)$ either $n_1\geq n-1$ or $m_1=m$ were already
classified, we shall reduce our investigation to the case of the
characteristic sequence $(n_1, \dots, n_k| m_1, \dots m_s),$
where $n_1\leq n-2$ and $m_1 \leq m-1$

From the Definition \ref{d4} we have that a Leibniz algebra $L_0$
has characteristic sequence $(n_1, \dots, n_k).$ Let $l \in
\mathbb{N}$ be a nilindex of the Leibniz algebra $L_0.$ Since $n_1
\leq n-2,$ then we have $l \leq n-1$ and Leibniz algebra $L_0$ has
at least two generators (the elements which belong to the set
$L_0\setminus L_0^2$).

For the completeness of the statement below we present the
classifications of the papers \cite{FilSup}--\cite{C-G-O-Kh} and
\cite{G-K-N}.

$Leib_{1,m}:$
$$\small
\left\{\begin{array}{l} [y_i,x_1]=y_{i+1}, \ \ 1\leq i \leq m-1.
\end{array}\right.$$
$Leib_{n,1}:$
$$ \small\left\{\begin{array}{ll} [x_i,x_1]=x_{i+1},& 1 \leq i \leq n-1,\\{}
[y_1,y_1]=\alpha x_n, & \alpha = \{0, \ 1\}.\end{array}\right.$$
$Leib_{2,2}:$
$$\small\begin{array}{ll}
\left\{\begin{array}{l} [y_1,x_1]=y_2, \\ {[}x_1,y_1]=\displaystyle \frac12 y_2, \\[2mm] {[}x_2,y_1]=\displaystyle
y_2, \\[2mm] [y_1,x_2] = 2y_2, \\ {[}y_1,y_1]=x_2, \\
\end{array}\right.&
 \left\{\begin{array}{l}
[y_1,x_1]=y_2, \\ {[}x_2,y_1]=\displaystyle y_2, \\[2mm] {[}y_1,x_2]= 2y_2, \\ {[}y_1,y_1]=x_2. \\
\end{array}\right.
\end{array}$$
$Leib_{2,m}, \ m \ \rm{is \ odd}:$
$$\small\begin{array}{ll}
&\\[2mm]
\left\{\begin{array}{ll} [x_1,x_1]=x_2, \ & m\geq 3, \\{}
[y_i,x_1]=y_{i+1},& 1\leq i\leq m-1,\\{} [x_1,y_i]=-y_{i+1},&1\leq
i\leq m-1,\\{} [y_i,y_{m+1-i}]=(-1)^{j+1}x_2, & 1\leq i\leq
\frac{m+1}2.
\end{array}\right.&
\left\{\begin{array}{ll} [y_i,x_1]= [x_1, y_i]= -y_{i+1},& 1\leq
i\leq m-1,\\{} [y_{m+1-i},y_i]=(-1)^{j+1}x_2, & 1\leq i\leq
\frac{m+1}2.
\end{array}\right.
\end{array}$$

In order to present the classification of Leibniz superalgebras
with characteristic sequence $(n-1,1 | m)$, $n \geq 3$ and
nilindex $n+m$ we need to introduce the following families of
superalgebras:
$$\bf Leib_{n,n-1}:$$
$L(\alpha_4, \alpha_5, \ldots, \alpha_n, \theta):$
$$
\left\{\begin{array}{ll}
[x_1,x_1]=x_3,& \\[1mm] {[}x_i,x_1]=x_{i+1},&    2 \le i \le n-1,
\\[1mm]
{[}y_j,x_1]=y_{j+1},&    1 \le j \le n-2,
\\[1mm]
{[}x_1,y_1]= \frac12 y_2,&
\\[1mm]
{[}x_i,y_1]= \frac12 y_i,  &    2 \le i \le n-1,
\\[1mm]
{[}y_1,y_1]=x_1,&
\\[1mm]
{[}y_j,y_1]=x_{j+1},& 2 \le j \le n-1,
\\[1mm]
{[}x_1,x_2]=\alpha_4x_4+ \alpha_5x_5+ \ldots +
\alpha_{n-1}x_{n-1}+ \theta x_n,&
\\[1mm]
{[}x_j,x_2]= \alpha_4x_{j+2}+ \alpha_5x_{j+3}+ \ldots +
\alpha_{n+2-j}x_n,& 2 \le j \le n-2,
\\[1mm]
{[}y_1,x_2]= \alpha_4y_3+ \alpha_5y_4+ \ldots +
\alpha_{n-1}y_{n-2}+\theta y_{n-1},&
\\[1mm]
{[}y_j,x_2]= \alpha_4y_{j+2}+ \alpha_5y_{j+3}+ \ldots +
\alpha_{n+1-j}y_{n-1},& 2 \le j \le n-3. \end{array} \right.$$

$G(\beta_4,\beta_5, \ldots, \beta_n, \gamma):$
$$ \left\{\begin{array}{ll}
[x_1,x_1]=x_3, \\[1mm] {[}x_i,x_1]=x_{i+1},&    3 \le i \le n-1,
\\[1mm]
{[}y_j,x_1]=y_{j+1}, &    1 \le j \le n-2,
\\[1mm]
{[}x_1,x_2]= \beta_4x_4+\beta_5x_5+\ldots+\beta_nx_n,&
\\[1mm]
{[}x_2,x_2]= \gamma x_n,& \\[1mm]
{[}x_j,x_2]= \beta_4x_{j+2}+\beta_5x_{j+3}+\ldots+\beta_{n+2-j}x_n,& 3\le j\le n-2, \\[1mm]
{[}y_1,y_1]=x_1,&
\\[1mm]
{[}y_j,y_1]=x_{j+1},& 2 \le j \le n-1,
\\[1mm]
{[}x_1,y_1]= \frac12 y_2,&
\\[1mm]
{[}x_i,y_1]= \frac12 y_i,& 3\le i\le n-1,
\\[1mm]
{[}y_j,x_2]= \beta_4y_{j+2}+\beta_5y_{j+3}+ \ldots +
\beta_{n+1-j}y_{n-1},& 1\le j\le n-3. \end{array} \right.$$
$$\bf Leib_{n,n}:$$
$M(\alpha_4, \alpha_5, \ldots, \alpha_n, \theta, \tau):$

$$ \left\{
\begin{array}{ll}
[x_1,x_1]=x_3,& \\[1mm] {[}x_i,x_1]=x_{i+1},&     2 \le i \le n-1,
\\[1mm]
{[}y_j,x_1]=y_{j+1}, &    1 \le j \le n-1,
\\[1mm]
{[}x_1,y_1]=  \frac12 y_2,&
\\[1mm]
{[}x_i,y_1]=  \frac12 y_i, &    2 \le i \le n,
\\[1mm]
{[}y_1,y_1]=x_1,&
\\[1mm]
{[}y_j,y_1]=x_{j+1},& 2 \le j \le n-1,
\\[1mm]
{[}x_1,x_2]=\alpha_4x_4+ \alpha_5x_5+ \ldots +
\alpha_{n-1}x_{n-1}+ \theta x_n,&
\\[1mm]
{[}x_2,x_2]=\gamma_4x_4,&\\[1mm]
 {[}x_j,x_2]= \alpha_4x_{j+2}+
\alpha_5x_{j+3}+ \ldots + \alpha_{n+2-j}x_n,&3 \le j \le n-2,
\\[1mm]
{[}y_1,x_2]= \alpha_4y_3+ \alpha_5y_4+ \ldots +
\alpha_{n-1}y_{n-2}+\theta y_{n-1}+\tau y_n,&
\\[1mm]
{[}y_2,x_2]= \alpha_4y_4+ \alpha_5y_4+ \ldots +
\alpha_{n-1}y_{n-1}+\theta y_n,&
\\[1mm]
{[}y_j,x_2]= \alpha_4y_{j+2}+ \alpha_5y_{j+3}+ \ldots +
\alpha_{n+2-j}y_{n},& 3 \le j \le n-2.\end{array} \right.$$

$H(\beta_4, \beta_5, \ldots,\beta_n, \delta , \gamma ):$
$$ \left\{
\begin{array}{ll}
[x_1,x_1]=x_3,& \\[1mm] {[}x_i,x_1]=x_{i+1},&     3 \le i \le n-1,
\\[1mm]
{[}y_j,x_1]=y_{j+1}, &    1 \le j \le n-2,
\\[1mm]
{[}x_1,x_2]= \beta_4x_4+\beta_5x_5+\ldots+\beta_nx_n,&
\\[1mm]
{[}x_2,x_2]= \gamma x_n, &\\[1mm]
{[}x_j,x_2]= \beta_4x_{j+2}+\beta_5x_{j+3}+\ldots+\beta_{n+2-j}x_n,& 3\le j\le n-2, \\[1mm]
{[}y_1,y_1]=x_1,&
\\[1mm]
{[}y_j,y_1]=x_{j+1},& 2 \le j \le n-1,
\\[1mm]
{[}x_1,y_1]= \frac12 y_2,&
\\[1mm]
{[}x_i,y_1]= \frac12 y_i,& 3\le i\le n-1,
\\[1mm]
{[}y_1,x_2]= \beta_4y_3+\beta_5y_4+ \ldots + \beta_ny_{n-1}+\delta y_n,& \\[1mm]
{[}y_j,x_2]= \beta_4y_{j+2}+\beta_5y_{j+3}+ \ldots +
\beta_{n+2-j}y_n,& 2\le j\le n-2. \end{array} \right.$$

Analogously, for the Leibniz superalgebras with characteristic
sequence $(n | m-1,1)$, $n \geq 2$ we introduce the following
families of superalgebras:
$$\bf Leib_{n,n+1}:$$
$E\left( \gamma, \beta_{\left[ \frac{n+4}2\right]}, \beta_{\left[
\frac{n+4}2\right]+1}, \ldots, \beta_n,\beta\right):$

$$ \left\{
\begin{array}{ll}
[x_i,x_1]=x_{i+1},&     1 \le i \le n-1,
\\[1mm]
[y_j,x_1]=y_{j+1}, &    1 \le j \le n-1,
\\[1mm]
[x_i,y_1]=\frac12 y_{i+1}, &1\le i\le n-1,
\\[1mm]
[y_j,y_1]=x_{j}, & 1\le j\le n, \\[1mm]
[y_{n+1},y_{n+1}]=\gamma x_n, & \\[1mm]
[x_i,y_{n+1}]=\sum\limits_{k=\left[\frac{n+4}2\right]}^{n+1-i}
\beta_k y_{k-1+i}, & 1\le i\le \left[ \frac{n-1}2\right],
\\[1mm]
[y_1,y_{n+1}]=-2\sum\limits_{k=\left[\frac{n+4}2\right]}^{n}
\beta_k x_{k-1}+\beta x_n,& \\[1mm]
[y_j,y_{n+1}]=-2\sum\limits_{k=\left[\frac{n+4}2\right]}^{n+2-j}
\beta_k x_{k-2+j},& 2\le j\le \left[\frac{n+1}2\right]. \\[1mm]
\end{array} \right.$$

$$\bf Leib_{n,n+2}:$$
$ F\left( \beta_{\left[\frac{n+5}2\right]},
\beta_{\left[\frac{n+5}2\right]+1}, \ldots,\right.$ $\left.
\beta_{n+1}\right): $

$$ \left\{
\begin{array}{ll}
[x_i,x_1]=x_{i+1},&     1 \le i \le n-1,
\\[1mm]
[y_j,x_1]=y_{j+1}, &    1 \le j \le n,
\\[1mm]
[x_i,y_1]=\frac12 y_{i+1}, &1\le i\le n,
\\[1mm]
[y_j,y_1]=x_{j}, & 1\le j\le n, \\[1mm]
[x_i,y_{n+2}]=\sum\limits_{k=\left[\frac{n+5}2\right]}^{n+2-i}
\beta_k y_{k-1+i}, & 1\le i\le \left[ \frac{n}2\right],
\\[1mm]
[y_j,y_{n+2}]=-2\sum\limits_{k=\left[\frac{n+5}2\right]}^{n+2-j}
\beta_k x_{k-2+j}, & 1\le j\le \left[ \frac{n}2\right]
\end{array} \right.$$

Let us introduce also the following operators which act on
$k$-dimensional vectors:
$$
\begin{array}{rl} j & \\ V^0_{j,k}(\alpha_1, \alpha_2,\ldots, \alpha_k) =
 ( 0, \ldots, 0, 1, & \delta \sqrt[j]{\delta ^{j+1}} S_{m,j}^{j+1} \alpha_{j+1}, \delta \sqrt[j]{\delta
 ^{j+2}}
S_{m,j}^{j+2} \alpha_{j+2}, \ldots , \delta \sqrt[j]{\delta ^{k}}
S_{m,j}^{k} \alpha_{k}   ) ; \\ \end{array}
$$ $$
\begin{array}{rl} j & \\ V^1_{j,k}(\alpha_1, \alpha_2,\ldots, \alpha_k) =
 ( 0, \ldots, 0, 1, &  S_{m,j}^{j+1} \alpha_{j+1},  S_{m,j}^{j+2} \alpha_{j+2}, \ldots , S_{m,j}^{k}
\alpha_{k}  ) ; \\ \end{array}
$$ $$
\begin{array}{rl} j & \\ V^2_{j,k}(\alpha_1, \alpha_2,\ldots, \alpha_k) =
 ( 0, \ldots, 0, 1, &  S_{m,2j+1}^{2(j+1)+1} \alpha_{j+1},  S_{m,2j+1}^{2(j+2)+1} \alpha_{j+2}, \ldots ,
S_{m,2j+1}^{2k+1} \alpha_{k}  ) ; \\ \end{array}
$$ $$
V^0_{k+1,k}(\alpha_1, \alpha_2,\ldots, \alpha_k) =
V^1_{k+1,k}(\alpha_1, \alpha_2,\ldots, \alpha_k) =
V^2_{k+1,k}(\alpha_1, \alpha_2,\ldots, \alpha_k) = (0, 0, \ldots,
0);
$$
$$
W_{s,k}(0,0,\ldots,\stackrel{j-1}{0},\stackrel{j}{1},S_{m,j}^{j+1}\alpha_{j+1},S_{m,j}^{j+2}\alpha_{j+2},\ldots,
S_{m,j}^k\alpha_k,\gamma)=
$$
$$
=( 0, 0,\ldots, \stackrel{j}{1} ,0,\ldots,\stackrel{s+j}{1},
S_{m,s}^{s+1}\alpha_{s+j+1}, S_{m,s}^{s+2}\alpha_{s+j+2},\ldots,
S_{m,s}^{k-j}\alpha_k, S_{m,s}^{k+6-2j}\gamma),
$$
$$
W_{k+1-j,k}(0,0,\ldots,\stackrel{j-1}{0},\stackrel{j}{1},S_{m,j}^{j+1}\alpha_{j+1},S_{m,j}^{j+2}\alpha_{j+2},\ldots,
S_{m,j}^k\alpha_k,\gamma)=$$ $\qquad
=(0,0,\ldots,\stackrel{j}{1},0,\ldots,1),$ $$
W_{k+2-j,k}(0,0,\ldots,\stackrel{j-1}{0},\stackrel{j}{1},S_{m,j}^{j+1}
\alpha_{j+1},S_{m,j}^{j+2}\alpha_{j+2},\ldots,
S_{m,j}^k\alpha_k,\gamma)=$$ $\qquad
=(0,0,\ldots,\stackrel{j}{1},0,\ldots,0), $ \\
where $k\in N,$ $\delta=\pm 1,$ $1\le j\le k,$ $1\le s\le k-j,$
and $\displaystyle S_{m,t}=\cos\frac{2\pi m}t+i\sin\frac{2\pi m}t$
$(m=0,1,\ldots, t-1).$

Below we present the complete list of pairwise non-isomorphic
Leibniz superalgebras with $n+m:$

with characteristic sequence equal to $(n-1,1|m):$
$$
\begin{array}{l} L\left( V^1_{j,n-3}\left( \alpha_4,\alpha_5,\ldots,
\alpha_n\right),S_{m,j}^{n-3}\theta\right),\qquad \ \
1\le j\le n-3, \\[2mm] L(0,0,\ldots,0,1), \ L(0,0,\ldots,0), \ G(0,0,\ldots,0,1), \
G(0,0,\ldots,0),
\\[2mm] G\left( W_{s,n-2}\left( V^1_{j,n-3}\left(
\beta_4,\beta_5,\ldots,\beta_n\right),\gamma\right)\right),\quad
1\le j\le n-3,\ 1\le s\le n-j,
\\[2mm] M\left( V^1_{j,n-2}\left( \alpha_4,\alpha_5,\ldots,\alpha_n\right),S_{m,j}^{n-3}\theta\right),
\qquad \ 1\le j\le n-2,
\\[2mm] M(0,0,\ldots,0,1), \ M(0,0,\ldots,0), \ H(0,0,\ldots,0,1), \
H(0,0,\ldots,0), \\[2mm] H\left( W_{s,n-1}\left( V^1_{j,n-2}\left(
\beta_4,\beta_5,\ldots,\beta_n\right),\gamma\right)\right),\quad
1\le j\le n-2,\ 1\le s\le n+1-j, \\\end{array}
$$ with characteristic sequence equal to $(n|m-1,1)$ if $n$ is odd
(i.e. $n=2q-1$):
$$
\begin{array}{lll}
E\left(1,\delta\beta_{q+1}, V_{j,q-2}^0(\beta_{q+2}, \beta_{q+3},
\ldots, \beta_n),0\right), & \displaystyle
\beta_{q+1}\ne \pm\frac12, & 1\le j\le q-1, \\[2mm]
E\left(1,\beta_{q+1}, V_{j,q-1}^0(\beta_{q+2}, \beta_{q+3},
\ldots,
\beta_n,\beta)\right), & \beta_{q+1}=\displaystyle  \pm\frac12, & 1\le j\le q, \\[2mm] E(0,1,V_{j,q-2}^0(\beta_{q+2}, \beta_{q+3},
\ldots, \beta_n),0), & 1\le j\le q-1, & \\[2mm] E(0,0, W_{s,q-1}(V^1_{j,q-1}(\beta_{q+2}, \beta_{q+3}, \ldots, \beta_n,
\beta))), & 1\le j\le q-1, & 1\le s\le q-j, \\[2mm] E(0,0,\ldots,0); \\
\end{array}
$$

if $n$ is even (i.e. $n=2q$):
$$
\begin{array}{lll} E(1,V^2_{j,q-1}(\beta_{q+2}, \beta_{q+3}, \ldots, \beta_n,), 0), & 1\le j\le q, & \\[2mm]
E(0, W_{s,q}(V^1_{j,q}(\beta_{q+2}, \beta_{q+3}, \ldots,
\beta_n,\beta))), & 1\le j\le q, & 1\le s\le q+1-j,
\\[2mm] E(0,0,\ldots,0). \\
\end{array}
$$
$$
F\left( W_{s,n+2-\left[\frac{n+5}2\right]} \left( V^1_{j,
n+2-\left[\frac{n+5}2\right]} \left( \beta_{\left[ \frac{n+5}2
\right]}, \beta_{\left[\frac{n+5}2\right]+1}, \ldots,
\beta_{n+1}\right)\right)\right),
$$
where $1\le j\le n+2-\displaystyle \left[\frac{n+5}2\right],$
$1\le s\le n+3-\displaystyle \left[ \frac{n+5}2\right]-j,$
$$ F(0,0,\ldots,0). $$

For a given Leibniz algebra $A$ of the nilindex $l$ we put
$gr(A)_i = A^i / A^{i+1}, \quad 1 \leq i \leq l-1$ and $gr(A) =
gr(A)_1 \oplus gr(A)_2 \oplus \dots \oplus gr(A)_{l-1}.$ Then
$[gr(A)_i, gr(A)_j] \subseteq gr(A)_{i+j}$ and we obtain the
graded algebra $gr(A).$

\begin{defn} \label{d5} The gradation constructed in this way is called the natural
gradation and if a Leibniz algebra $G$ is isomorphic to $gr(A)$ we
say that the algebra $G$ is naturally graded Leibniz algebra.
\end{defn}

\section{The main result}
Let $L$ be a Leibniz superalgebra with characteristic sequence
$(n_1, \dots, n_k| m_1, \dots, m_s),$ where $ n_1 \leq n-2,$ $m_1
\leq m-1$ and of nilindex $n+m.$ Since the second part of the
characteristic sequence of the Leibniz superalgebra $L$ is equal
to $(m_1, \dots, m_s)$ then by the definition of the
characteristic sequence there exists a nilpotent endomorphism
$R_x$ ($x\in L_0\setminus L_0^2$) of the space $L_1$ such that its
Jordan form consists of $s$ Jordan blocks. Therefore, we can
assume the existence of an adapted basis $\{y_1, y_2, \dots,
y_m\}$ of the subspace $L_1,$ such that
$$
\left\{\begin{array}{ll} [y_j,x]=y_{j+1}, \ & j \notin \{m_1,
m_1+m_2, \dots, m_1+m_2+ \dots + m_s\},\\{} [y_j,x]=0,& j \in
\{m_1, m_1+m_2, \dots, m_1+m_2+ \dots + m_s\}.
\end{array}\right. \eqno(1)
$$ for some $x \in L_0\setminus L_0^2.$

Further we shall use a homogeneous basis $\{x_1, \dots, x_n\}$
with respect to natural gradation of the Leibniz algebra $L_0,$
which is also agreed with the lower central sequence of $L.$

The main result of the paper is that the nilindex of the Leibniz
superalgebra $L$ with characteristic sequence $(n_1, \dots, n_k|
m_1, \dots, m_s),$ $ n_1 \leq n-2, m_1 \leq m-1$ is less than
$n+m.$

According to the Theorem \ref{t1} we have the description of
single-generated Leibniz superalgebras, which have nilindex
$n+m+1.$ If the number of generators is greater than two, then
superalgebra has nilindex less than $n+m.$ Therefore, we should
consider case of two-generated superalgebras.

The possible cases for the generators are:

1. Both generators lie in $L_0,$ i.e. $dim(L^2)_0 = n-2$ and
$dim(L^2)_1 = m;$

2. One generator lies in $L_0$ and another one lies in $L_1,$
 i.e. $dim(L^2)_0 = n-1$ and $dim(L^2)_1 = m-1;$

3. Both generators lie in $L_1,$ i.e. $dim(L^2)_0 = n$ and
$dim(L^2)_1 = m-2.$

Moreover, two-generated superalgebra $L$ has nilindex $n+m$ if and
only if $dim L^k = n+m-k$ for $2 \leq k \leq n+m.$

Since $m\neq 0$ we omit the case where both generators lie in even
part.

\subsection{The case of one generator in $L_0$ and another one in $L_1$}

\

Since $dim(L^2)_0 = n-1$ and $dim(L^2)_1 = m-1$ then there exist
some $m_j,$ $0 \leq j \leq s-1$ (here we assume $m_0=0$) such that
$y_{m_1+ \dots + m_j + 1} \notin L^2.$ By a shifting of basic
elements we can assume that $m_j=m_0,$ i.e. the basic element
$y_1$ can be chosen as a generator of the superalgebra $L.$ Of
course, by this shifting the condition from definition of the
characteristic sequence $m_1 \geq m_2 \geq \dots \geq m_s$ can be
broken, but further we shall not use the condition.

Let $L=L_0 \oplus L_1$ be a two generated Leibniz superalgebra
from $Leib_{n,m}$ with characteristic sequence equal to $(n_1,
\dots, n_k| m_1, \dots, m_s)$ and let $\{x_1, \dots, x_n, y_1,
\dots, y_m\}$ be a basis of the $L.$

\begin{lem}\label{l1} Let one generator lies in $L_0$ and
another one lies in $L_1.$ Then $x_1$ and $y_1$ can be chosen as
generators of the $L.$ Moreover, in equality (1) instead of
element $x$ we can suppose $x_1.$
\end{lem}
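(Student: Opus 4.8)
The plan is to show that the two generators, which by hypothesis split as one in $L_0$ and one in $L_1$, can be normalized to the specific basis vectors $x_1$ and $y_1$. The odd generator is the easy half: we already arranged, via the shifting of basic elements described just before the lemma, that $y_1 \notin L^2$, so $y_1$ is a legitimate odd generator and $y_1 \in L_1 \setminus L_1^2$ (where here $L_1^2$ denotes the odd part of $L^2$). The content of the lemma is therefore really two claims: first, that the even generator can be taken to be $x_1$, the first vector of the natural-gradation basis of $L_0$; and second, that after this choice the element $x$ appearing in the adapted-basis relations $(1)$ may be replaced by $x_1$.

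First I would address the even generator. Since $x_1, \dots, x_n$ is a homogeneous basis with respect to the natural gradation of $L_0$ that is simultaneously adapted to the lower central series of $L$, the generators of $L_0$ as a Leibniz algebra are exactly the images of elements of $gr(L_0)_1 = L_0/L_0^2$, and $x_1$ is chosen to span (together with possibly other $x_i$ in degree one) this top graded piece. In the present case $\dim(L^2)_0 = n-1$, so $L_0$ contributes exactly one generator to $L$, i.e. $\dim(L_0 \setminus L_0^2)$ is spanned modulo $L_0^2$ by a single class, which we may take to be $x_1$. The only subtlety is that the even generator of $L$ need not a priori be a \emph{generator of the Leibniz algebra} $L_0$ on its own; it could be an element whose square or bracket with $y_1$ produces the missing dimension. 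Here I would invoke the remark in the excerpt that the elements $[a,b] + (-1)^{\alpha\beta}[b,a]$ lie in $\mathcal{R}(L)$, together with the structure of $(1)$, to argue that the even direction not in $L^2$ must already come from $L_0 \setminus L_0^2$, so $x_1$ works.

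The second half — replacing $x$ by $x_1$ in $(1)$ — is where I expect the main obstacle. The adapted basis of $L_1$ was built relative to \emph{some} $x \in L_0 \setminus L_0^2$ realizing the maximal Jordan structure $C_1$, and there is no reason that this $x$ coincides with $x_1$. Since $x$ and $x_1$ agree modulo $L_0^2$ (both are the unique generator direction), we may write $x = x_1 + z$ with $z \in L_0^2$. The plan is to show that right multiplication by $z$ acts on $L_1$ in a strictly degree-raising (hence nilpotent-of-higher-order) way relative to the filtration induced by $R_{x_1}$, so that $R_x$ and $R_{x_1}$ have the same Jordan block structure on $L_1$; one then adjusts the adapted basis $\{y_j\}$ by a filtration-preserving change of coordinates to convert the relations $(1)$ for $x$ into the same relations for $x_1$. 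Concretely I would track how $[y_j, z]$ expresses in terms of $y_{j+1}, y_{j+2}, \dots$ using that $z \in L_0^2$ and the gradation is compatible with the lower central series, then inductively correct the $y_j$.

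The hard part will be controlling this coordinate change without disturbing the block boundaries $j \in \{m_1, m_1+m_2, \dots\}$ where $[y_j, x] = 0$: one must verify that the correction terms from $z$ do not create a nonzero $[y_j, x_1]$ at those indices, i.e. that the nilpotency pattern of $R_{x_1}$ on $L_1$ genuinely matches that of $R_x$. This is essentially a statement that the characteristic sequence is computed by any generic element of $L_0 \setminus L_0^2$ and that $x_1$ is such a generic element, which follows from the invariance of the characteristic sequence (the analogue of corollary 3.0.1 of \cite{GL} quoted in the excerpt) applied to the lower-triangular perturbation $R_{x_1} = R_x - R_z$. Once the block structure is shown invariant, the final rewriting of $(1)$ is a routine triangular normalization.
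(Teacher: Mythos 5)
Your reduction to a filtration-preserving perturbation rests on the claim that $x$ and $x_1$ agree modulo $L_0^2$, and this is exactly where the argument breaks: you have conflated the square $L_0^2$ of the Leibniz algebra $L_0$ with the even part $(L^2)_0$ of the square of the superalgebra. Because $n_1\le n-2$, the algebra $L_0$ has at least two generators, so $\dim L_0/L_0^2\ge 2$, whereas $\dim L_0/(L^2)_0=1$; the gap between these two subspaces is filled by brackets of odd elements. The element $x$ of equality (1) is only guaranteed to lie in $L_0\setminus L_0^2$ (that is all the definition of the characteristic sequence provides), and the genuinely problematic case --- the one the lemma exists to handle --- is $x\in L^2$. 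In that case, in any basis agreed with the lower central series, $x$ has zero component along $x_1$, so $x-x_1$ lies neither in $L_0^2$ nor even in $(L^2)_0$, and $R_x-R_{x_1}$ is in no sense a lower-triangular, filtration-raising perturbation. Your strategy therefore only covers the easy case $x\notin L^2$, where no perturbation analysis is needed at all: one simply relabels $x$ as $x_1$ (the paper dismisses this case as ``evident''). A secondary gap is your genericity claim: invariance of the characteristic sequence under isomorphism says nothing about \emph{which} elements of $L_0\setminus L_0^2$ attain the maximal Jordan structure, and the attaining set, though open, need not contain the prescribed element $x_1$.

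The paper resolves both difficulties at once by perturbing in the opposite direction: instead of moving $x_1$ toward $x$, it moves $x$ toward a generator. If $x\in L^2$, pick a basis element $x_{i_0}\notin L^2$ and set $x_1'=Ax+x_{i_0}$; then $x_1'\notin L^2$ for every $A\neq 0$ (hence $x_1'$ is a generator of $L$), and for sufficiently large $A$ the operator $R_{x_1'}$ still realizes the relations (1), since the direction of $x_1'$ approaches that of $x$ and realizing (1) is an open condition. One then rebuilds the odd basis inside each Jordan block by $y_j'=[y_{j-1}',x_1']$, keeping $y_j'=y_j$ at the block-starting indices. This one-parameter family of candidate generators, with genericity in $A$, is the idea your proposal is missing.
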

\begin{proof}
As mentioned above $y_1$ can be chosen as the first generator of
$L$. If $x\in L\setminus L^2$ then the assertion of the lemma is
evident. If $x\in L^2$ then there exists some $i_0$ ($2\leq i_0$)
such that $x_{i_0}\in L\setminus L^2.$ Set $x_1'=Ax + x_{i_0}$ for
$A\neq 0$ then $x_1'$ is a generator of the superalgebra $L$
(since $x_1'\in L\setminus L^2$). Moreover, making transformation
of the basis of $L_1$ as follows
$$
\left\{\begin{array}{ll} y_j'= y_j, \ & j \in \{1, m_1+1, \dots,
m_1+m_2+ \dots + m_{s-1}+1\},\\{} y_j'= [y_{j-1}',x_1'],& j \notin
\{1, m_1+1, \dots, m_1+m_2+ \dots + m_{s-1}+1\}.
\end{array}\right.$$
and taking sufficiently big value of the parameter $A$ we preserve
the equality (1). Thus, in the basis $\{x_1', x_2, \dots, x_n,
y_1', y_2', \dots, y_m'\}$ the elements $x_1'$ and $y_1'$ are
generators.
\end{proof}

Due to Lemma \ref{l1} further we shall suppose that $\{x_1, y_1\}$
are generators of the Leibniz superalgebra $L.$ Therefore,
$$L^2 = \{x_2, x_3,  \dots, x_n, y_2,
y_3, \dots, y_m\}.$$ Let us introduce the notations:
$$[x_i,y_1]= \sum\limits_{j=2}^m \alpha_{i,j}y_j,\ 1 \le i \le n, \  \ \ [y_i,y_1]=
\sum\limits_{j=2}^n \beta_{i,j}x_j, \ 1 \le i \le m. \eqno (2)$$

Without loss of generality we can assume that
$y_{m_1+\dots+m_i+1}\in L^{t_i}\setminus L^{{t_i}+1}$, where
$t_i<t_j$ for $1\leq i<j\leq s-1.$

Firstly we consider the case of $dim(L^3)_0 = n-1,$ then
$dim(L^3)_0 = n-2.$\\[0,5mm]

 {\bf Case $dim(L^3)_0 = n-1$}.\\[0,5mm]

In this subcase we have
$$L^3 = \{x_2, x_3, \dots, x_n, y_3, \dots,
y_{m_1}, B_1y_2 +B_2y_{m_1+1}, y_{m_1+2}, \dots, y_m\},$$ where
$(B_1,B_2)\neq(0,0).$

Analyzing the way the element $x_2$ can be obtained, we conclude
that there exist $i_0 \ (2 \leq i_0 \leq m)$ such that
$[y_{i_0},y_1]= \sum\limits_{j=2}^n \beta_{i_0,2}x_j, \
\beta_{i_0,2} \neq 0.$

Let us show that $i_0 \notin\{m_1+1, \dots, m_1+ \dots +
m_{s-1}+1\}.$ It is known that the elements $y_{m_1+ m_2+1},
\dots, y_{m_1+ \dots + m_{s-1}+1}$ are generated from the products
$[x_i, y_1], \  (2 \leq i \leq n).$ Due to nilpotency of $L$ we
get $i_0 \notin \{m_1+m_2+1, \dots, m_1+ \dots + m_{s-1}+1\}.$ If
$y_{m_1+1}$ is generated by $[x_1, y_1],$ i.e. in the expression
$[x_1, y_1] = \sum\limits_{j=2}^m \alpha_{1,j}y_j$ $\alpha_{1,
m_1+1} \neq 0$ then we consider the product
$$[[x_1, y_1],y_1] = [\sum\limits_{j=2}^m \alpha_{1,j}y_j, y_1]
= \alpha_{1, m_1+1}\beta_{m_1+1,2}x_2 + \sum\limits_{i \geq 3}
(*)x_i.$$ On the other hand,
$$[[x_1, y_1],y_1] = \frac 1 2 [x_1,[y_1, y_1]] =
\frac 1 2 [x_1, \sum\limits_{j=2}^n \beta_{1,j}x_j]
 = \sum\limits_{i \geq 3}
(*)x_i$$

Comparing the coefficients at the corresponding basic elements we
obtain $\alpha_{1, m_1+1}\beta_{m_1+1,2}=0,$ which implies
$\beta_{m_1+1,2}=0.$ It means that $i_0 \neq m_1+1.$ Therefore,
$\beta_{i_0,2} \neq 0,$ where $i_0 \notin\{m_1+1, \dots, m_1+
\dots + m_{s-1}+1\}.$

\

\textbf{Case $y_2 \notin L^3.$} Then $B_2 \neq 0.$ Let
$h\in\mathbb{N}$ be a number such that $x_2 \in L^h\setminus
L^{h+1},$ that is $$ L^h = \{x_2, x_3, \dots, x_n, y_h,\dots,
y_{m_1}, B_1y_2 +B_2y_{m_1+1}, y_{m_1+2}, \dots, y_m\}, \  h \geq
3,$$
$$L^{h+1} = \{x_3, x_4, \dots, x_n, y_h, \dots,
y_{m_1}, B_1y_2 +B_2y_{m_1+1}, y_{m_1+2},\dots, y_m\}.$$

Since the elements $B_1y_2 +B_2y_{m_1+1}, y_{m_1+ m_2+1}, \dots,
y_{m_1+ \dots +m_{s-1}+1}$ are generated from the multiplications
$[x_i, y_1], 2 \leq i \leq n$ it follows that $h \leq m_1 +1.$

So, $x_2$ can be obtained only from product $[y_{h-1},y_1]$ and
thereby $\beta_{h-1,2} \neq 0.$ Making the change $x_2'=
\sum\limits_{j=2}^n \beta_{h-1,j}x_j$ we can assume that
$[y_{h-1}, y_1] = x_2.$

 Let now $p$ is a number such that $y_h \in L^{h+p}\setminus L^{h+p+1}.$
Then for the powers of superalgebra $L$ we have the following
$$L^{h+p} = \{x_{p+2}, x_{p+3}, \dots, x_n, y_h, \dots,
y_{m_1}, B_1y_2 +B_2y_{m_1+1}, y_{m_1+2}, \dots, y_m\}, \ p \geq
1,$$
$$L^{h+p+1} = \{x_{p+2}, x_{p+3}, \dots, x_n, y_{h+1}, \dots,
y_{m_1}, B_1y_2 +B_2y_{m_1+1}, y_{m_1+2}, \dots, y_m\}.$$

In the following lemma the useful expression for the products
$[y_i, y_j]$ is presented.
\begin{lem}\label{le2} The equality:
$$[y_i, y_j] = (-1)^{h-1-i}C_{j-1}^{h-1-i}x_{i+j+2-h} +
\sum\limits_{t > i+j+2-h}(*)x_t,  \eqno (3)$$ $1 \leq i \leq h-1,
\  h-i \leq j \leq min\{h-1, h-1+p-i\},$ holds.
\end{lem}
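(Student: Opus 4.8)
The plan is to prove (3) by induction on the second index $j$, using a recursion obtained from the Leibniz superidentity together with the adapted relations $y_{t+1}=[y_t,x_1]$ from (1). First I would apply the superidentity to the triple $(y_i,y_{j-1},x_1)$. Since $y_{j-1}\in L_1$ and $x_1\in L_0$, the sign factor $(-1)^{\alpha\beta}$ equals $1$, and using $[y_{j-1},x_1]=y_j$ and $[y_i,x_1]=y_{i+1}$ (valid because in the stated range the indices $i$ and $j-1$ avoid the set $\{m_1,m_1+m_2,\dots\}$, as $h-1\le m_1$) one gets the basic recursion
$$[y_i,y_j]=[[y_i,y_{j-1}],x_1]-[y_{i+1},y_{j-1}].$$
Each term on the right has smaller second index, so (3) propagates in $j$.

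The base case is $j=1$, where the range forces $i=h-1$ and the identity $[y_{h-1},y_1]=x_2$ established above is exactly the claim (the coefficient being $(-1)^0C_0^0=1$). For the inductive step I would substitute the inductive hypothesis for $[y_i,y_{j-1}]$ and $[y_{i+1},y_{j-1}]$ into the recursion. Right multiplication of the leading term of $[y_i,y_{j-1}]$ by $x_1$ shifts its index by one, since in the adapted homogeneous basis $[x_s,x_1]=x_{s+1}$ modulo terms of higher gradation; this sends its coefficient $(-1)^{h-1-i}C_{j-2}^{h-1-i}$ to the coefficient of $x_{i+j+2-h}$. Subtracting the leading term of $[y_{i+1},y_{j-1}]$, whose coefficient is $(-1)^{h-2-i}C_{j-2}^{h-2-i}$, and using $(-1)^{h-2-i}=-(-1)^{h-1-i}$, Pascal's identity $C_{j-2}^{h-1-i}+C_{j-2}^{h-2-i}=C_{j-1}^{h-1-i}$ yields precisely the asserted coefficient $(-1)^{h-1-i}C_{j-1}^{h-1-i}$ of $x_{i+j+2-h}$, all other contributions having strictly larger index.

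The delicate part is the gradation bookkeeping that keeps the unwritten terms genuinely of higher index, together with the two boundary values of $i$ where the recursion leaves the stated range. From the listed powers one reads off $x_s\in L^{h+s-2}$, $y_t\in L^t$ for $t\le h-1$, and $y_h\in L^{h+p}$ with $p\ge1$, so that $[y_a,y_b]\in L^{a+b}$ and $x_{i+j+2-h}$ is indeed the lowest-index summand (the constraint $j\le h-1+p-i$ guarantees this index stays below $p+2$). For $i=h-1$ the recursion produces $[y_h,y_{j-1}]\in L^{h+p+j-1}$, which contributes only indices $\ge p+j+1>j+1$, so the leading term comes solely from $[[y_{h-1},y_{j-1}],x_1]$. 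For the opposite boundary $i=h-j$ (with $j\ge2$) the term $[y_{h-j},y_{j-1}]$ lies in $L^{h-1}$, whose even part has no component in gradation degree $h-1$; hence after right multiplication by $x_1$ it affects only indices $\ge3$, and the coefficient of $x_2$ is supplied entirely by $-[y_{h-j+1},y_{j-1}]$, which by the inductive hypothesis equals $-(-1)^{j}x_2=(-1)^{j-1}x_2$, matching $(-1)^{h-1-(h-j)}C_{j-1}^{j-1}=(-1)^{j-1}$. Thus the main obstacle is not the binomial algebra but justifying, from the natural gradation, that $[x_s,x_1]=x_{s+1}$ up to higher terms and that the omitted products lie in strictly higher filtration degree; once these are in place the induction closes.
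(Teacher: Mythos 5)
Your proposal is correct and takes essentially the same approach as the paper: the paper's entire proof of this lemma is the single sentence that the formula ``is deduced by the induction on $j$ at any value of $i$,'' and your argument --- the superidentity recursion $[y_i,y_j]=[[y_i,y_{j-1}],x_1]-[y_{i+1},y_{j-1}]$, Pascal's identity for the leading coefficient, and the filtration bookkeeping at the two boundary values of $i$ --- is exactly that induction carried out in detail. The one point you flag as delicate, namely $[x_s,x_1]=x_{s+1}$ modulo higher terms, is the same normalization the paper itself invokes (``we can suppose $[x_i,x_1]=x_{i+1}$'') in the proof of Lemma \ref{l3}, so your argument stays within the paper's own setting.
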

\begin{proof} The proof is deduced by the induction on $j$ at any
value of $i.$
\end{proof}

For the natural number $p$ we have the following
\begin{lem}\label{l3} Under the above conditions
$p=1.$
\end{lem}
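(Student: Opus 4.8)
The plan is to argue by contradiction, assuming $p\geq 2$ and contradicting the explicit bases of $L^{h+p}$ and $L^{h+p+1}$ displayed above. First I would reformulate the claim: since for $p\geq 2$ the odd part of $L^{h+2}$ still contains $y_h$ while for $p=1$ it does not, the statement $p=1$ is equivalent to $y_h\notin L^{h+2}$, equivalently to $x_3\in L^{h+2}$. So everything reduces to locating the product $[y_h,y_1]$ in the filtration. The constraint I would exploit is that $y_h\in L^{h+p}$ and $y_1\in L$ force $[y_h,y_1]\in L^{h+p+1}$, whose even part is spanned by $x_{p+2},\dots,x_n$; hence for $p\geq 2$ the coefficient of $x_3$ in $[y_h,y_1]$ must vanish.

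Next I would compute $[y_h,y_1]$ independently. Using $y_h=[y_{h-1},x_1]$ together with the Leibniz superidentity applied to $y_{h-1},x_1,y_1$, one obtains $[y_h,y_1]=[x_2,x_1]+[y_{h-1},[x_1,y_1]]$. Writing $[x_1,y_1]=\sum_j\alpha_{1,j}y_j$, this reduces the second summand to a combination of the products $[y_{h-1},y_j]$, which are exactly the products governed by Lemma \ref{le2}: taking $i=h-1$ there gives $[y_{h-1},y_j]=x_{j+1}+\sum_{t>j+1}(*)x_t$. The lowest-index even element produced is therefore $x_3$, arising from $[x_2,x_1]$ and from the $j=2$ term. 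Collecting these leading contributions expresses the $x_3$-coefficient of $[y_h,y_1]$ as an explicit expression in $1$ and the $\alpha_{1,j}$, and comparing with the vanishing forced in the first step yields a relation among the structure constants.

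The step I expect to be the main obstacle is turning this relation into an actual contradiction: a single expansion only constrains the $\alpha_{1,j}$ rather than excluding $p\geq 2$ outright. To close the gap I would run a second, parallel computation — for instance expanding $[x_2,y_1]=[[y_{h-1},y_1],y_1]=\frac12[y_{h-1},[y_1,y_1]]$ and again reading off leading coefficients through Lemma \ref{le2}, or invoking the right-annihilator relation $[x_1,y_1]+[y_1,x_1]\in\mathcal{R}(L)$ — and then combine the two, relying on the precise binomial coefficients $(-1)^{h-1-i}C_{j-1}^{h-1-i}$ of Lemma \ref{le2} to guarantee that the relevant coefficient does not degenerate, so that the $x_3$-coefficient cannot vanish while $p\geq 2$. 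A secondary obstacle is the boundary value $h=m_1+1$, where $y_{h-1}=y_{m_1}$ sits atop its Jordan block and the identity $[y_{h-1},x_1]=y_h$ fails; this case I would treat separately, producing $y_h$ from whatever product supplies it at the appropriate filtration level and repeating the leading-term comparison.
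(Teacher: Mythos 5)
Your reduction is fine as far as it goes: for $p\geq 2$ one indeed has $[y_h,y_1]\in L^{h+p+1}$, whose even part starts at $x_{p+2}$, so the $x_3$-coefficient of $[y_h,y_1]$ must vanish. The fatal flaw is the product you chose to expand. From $[y_h,y_1]=[[y_{h-1},x_1],y_1]=[x_2,x_1]+[y_{h-1},[x_1,y_1]]$ no contradiction can ever come, because $[x_1,y_1]+[y_1,x_1]=[x_1,y_1]+y_2$ lies in the right annihilator $\mathcal{R}(L)$, whence $[y_{h-1},[x_1,y_1]]=-[y_{h-1},y_2]=-x_3+\sum_{t>3}(*)x_t$ by Lemma \ref{le2}; together with $[x_2,x_1]=x_3$ this shows that the $x_3$-coefficient of $[y_h,y_1]$ is \emph{identically} zero, for any superalgebra under consideration and any $p$. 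In other words, the linear relation among the $\alpha_{1,j}$ that your first computation produces is a tautology forced by the right-annihilator property alone, and carries no information about $p$; no choice of binomial coefficients can rescue it. Your fallback computations do not close this gap: $[x_2,y_1]=\frac12[y_{h-1},[y_1,y_1]]\in L^{h+2}$ says nothing new when $p\geq 2$, since then the drop from $L^{h+1}$ to $L^{h+2}$ is the even element $x_3$, so $(L^{h+1})_1=(L^{h+2})_1$ while $[x_2,y_1]$ is odd and already lies in $L^{h+1}$; and the right-annihilator relation, as just explained, works against you rather than for you. (A further slip in the bookkeeping: the term $j=m_1+1$ also contributes to the $x_3$-coefficient, namely $-\alpha_{1,m_1+1}B_1/B_2$, because only the combination $B_1y_2+B_2y_{m_1+1}$ sits deep in the filtration, not $y_{m_1+1}$ itself.)

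The missing idea — which is the paper's actual proof — is to multiply in the opposite order: consider $[y_1,[y_{h-1},x_1]]$ instead of $[[y_{h-1},x_1],y_1]$. The superidentity gives $[y_1,[y_{h-1},x_1]]=[[y_1,y_{h-1}],x_1]-[[y_1,x_1],y_{h-1}]$, and now \emph{both} terms are controlled without free structure constants: $[y_1,x_1]=y_2$ by equality (1), and Lemma \ref{le2} gives $[y_1,y_{h-1}]=(-1)^{h-2}x_2+\sum_{t>2}(*)x_t$ and $[y_2,y_{h-1}]=(-1)^{h-3}(h-2)x_3+\sum_{t>3}(*)x_t$. Hence the $x_3$-coefficient equals $(-1)^h\bigl(1+(h-2)\bigr)=(-1)^h(h-1)\neq 0$ (here the two leading terms add instead of cancelling, precisely because the coefficients $(-1)^{h-1-i}C_{j-1}^{h-1-i}$ are not symmetric under reversing the order of multiplication). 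For $p>1$ the left-hand side is either $[y_1,y_h]\in L^{h+p+1}$ (when $h\leq m_1$), which has no $x_3$-component, or $0$ (when $h=m_1+1$, since then $[y_{m_1},x_1]=0$); either way this is the desired contradiction. Note that this choice also disposes automatically of the boundary case $h=m_1+1$, which your plan had to defer to a separate, unspecified argument.
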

\begin{proof}
Assume the contrary, i.e. $p> 1.$ Then we can suppose
$$[x_i, x_1] = x_{i+1}, \ 2 \leq  i \leq p, \quad [x_{p+1}, y_1] =
\sum\limits_{j = h}^m\alpha_{p+1,j} y_j, \quad \alpha_{p+1,h} \neq
0.$$

Using the equality (3) we consider the following chain of
equalities

$$[y_1, [y_{h-1},x_1]] = [[y_1, y_{h-1}],x_1] - [[y_1,x_1],
y_{h-1}]= (-1)^{h-2}x_3 + \sum\limits_{t \geq 4}(*)x_t-$$
$$-(-1)^{h-3}(h-2)x_3 + \sum\limits_{t \geq 4}(*)x_t=
(-1)^{h}(h-1)x_3 + \sum\limits_{t \geq 4}(*)x_t.$$

If $h \leq m_1,$ then  $[y_1, [y_{h-1},x_1]] = [y_1, y_h]$. Since
$y_h \in L^{h+p}$ and $p > 1$ then in the decomposition of $[y_1,
y_h]$ the coefficient at the basic elements $x_2$ and $x_3$ are
equal to zero. Therefore, from the above equalities we get a
contradiction with assumption $p>1.$

If $h = m_1 +1,$ then $[y_1, [y_{h-1},x_1]] = 0$ and we also
obtain the irregular equality $(-1)^{h}(h-1)x_3 + \sum\limits_{t
\geq 4}(*)x_t=0.$ Therefore, the proof of the lemma is completed.
\end{proof}
We resume our main result in considered cases in the following

\begin{thm}\label{t2}
Let $L=L_0 \oplus L_1$  be a Leibniz superalgebra from
$Leib_{n,m}$ with characteristic sequence equal to $(n_1, \dots,
n_k | m_1, \dots, m_s),$ where $n_1\leq n-2, \ m_1\leq m-1$ and
let $dim(L^3)_0 = n-1$ with $y_2 \notin L^3.$ Then $L$ has a
nilindex less than $n+m.$
\end{thm}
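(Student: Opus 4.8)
The plan is to argue by contradiction: suppose $L$ has nilindex exactly $n+m$, so that $\dim L^k=n+m-k$ for every $2\le k\le n+m$ and hence each quotient $L^k/L^{k+1}$ is one\hyp dimensional. Since every $L^k$ is a $\mathbb{Z}_2$\hyp graded subspace, such a one\hyp dimensional quotient must be homogeneous, i.e. purely even or purely odd. Under the present hypotheses Lemma \ref{l3} applies and gives $p=1$, that is $y_h\in L^{h+1}\setminus L^{h+2}$; consequently $\overline{y_h}$ is a nonzero \emph{odd} class in $L^{h+1}/L^{h+2}$, forcing this quotient to be purely odd. My whole strategy is to compute $L^{h+1}/L^{h+2}$ directly and show it can only be purely even, which yields the desired contradiction.

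First I would reduce the quotient to brackets of the single new generator $x_2$ of $L^h$ against the two generators $x_1,y_1$ of $L$. Writing $L=\langle x_1,y_1\rangle + L^2$ and $L^h=\mathbb{C}x_2\oplus L^{h+1}$ (the latter because $x_2$ spans $L^h/L^{h+1}$), and using the standard inclusion $[L^a,L^b]\subseteq L^{a+b}$ to get $[L^h,L^2]\subseteq L^{h+2}$ and $[L^{h+1},L]\subseteq L^{h+2}$, one obtains
\[ L^{h+1}\equiv \mathbb{C}\,[x_2,x_1]+\mathbb{C}\,[x_2,y_1]\pmod{L^{h+2}}. \]
Thus $L^{h+1}/L^{h+2}$ is spanned by the even class $\overline{[x_2,x_1]}$ and the odd class $\overline{[x_2,y_1]}$, and it remains only to show that the odd contribution already sinks into $L^{h+2}$.

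The crux is the identity $[x_2,y_1]=\frac{1}{2}[y_{h-1},[y_1,y_1]]$, obtained by applying the Leibniz superidentity to $a=y_{h-1}$, $b=c=y_1$ and using $x_2=[y_{h-1},y_1]$. Now $[y_1,y_1]\in (L^2)_0=\langle x_2,\dots,x_n\rangle$, and since $x_2,\dots,x_n$ already lie in $L^h$ we have $(L^2)_0=(L^h)_0$, so $[y_1,y_1]\in L^h$. Therefore
\[ [x_2,y_1]=\frac{1}{2}\,[y_{h-1},[y_1,y_1]]\in[L^{h-1},L^h]\subseteq L^{2h-1}\subseteq L^{h+2}, \]
the last inclusion holding because $h\ge 3$. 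Hence $\overline{[x_2,y_1]}=0$, so $L^{h+1}/L^{h+2}$ is spanned by the single even class $\overline{[x_2,x_1]}$ and its odd part vanishes; this contradicts $\overline{y_h}$ being a nonzero odd element of $L^{h+1}/L^{h+2}$. The contradiction proves that the nilindex of $L$ is strictly less than $n+m$.

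The step I expect to demand the most care is the reduction in the second paragraph: one must verify rigorously that, modulo $L^{h+2}$, every product generating $L^{h+1}$ is accounted for by $[x_2,x_1]$ and $[x_2,y_1]$ alone, which rests on the inclusions $[L^a,L^b]\subseteq L^{a+b}$ and on $x_2$ being the unique new generator at level $h$. The identification $(L^2)_0=(L^h)_0$, which is exactly what pushes $[y_1,y_1]$ (and hence $[x_2,y_1]$) deep enough into the filtration, is equally essential. Once these are secured the parity clash is immediate and entirely local to level $h+1$, so that no analysis of the remaining odd blocks $m_2,\dots,m_s$ or of the even products $[x_i,x_1]$ is required.
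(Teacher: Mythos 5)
Your proof is correct and follows essentially the same route as the paper's: both rest on Lemma \ref{l3} together with the key identity $[x_2,y_1]=[[y_{h-1},y_1],y_1]=\frac{1}{2}[y_{h-1},[y_1,y_1]]$, which is shown to lie in $L^{h+2}$ and is therefore incompatible with $y_h\in L^{h+1}\setminus L^{h+2}$. The only differences are presentational: you package the contradiction as a parity clash in the one-dimensional graded quotient $L^{h+1}/L^{h+2}$ (using the explicit spanning by $[x_2,x_1]$ and $[x_2,y_1]$), whereas the paper tracks the coefficient $\alpha_{2,h}$ of $y_h$ in $[x_2,y_1]$ and derives $\alpha_{2,h}=0$ directly; likewise your depth estimate $[L^{h-1},L^h]\subseteq L^{2h-1}\subseteq L^{h+2}$ replaces the paper's simpler $[L^{h-1},L^3]\subseteq L^{h+2}$.
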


\begin{proof} Let us assume the contrary, i.e. nilindex of the superalgebra $L$ equal
to $n+m.$ Then according to the Lemma \ref{l3} we have
$$L^{h+2} = \{x_3, \dots, x_n, y_{h+1},\dots,
y_{m_1}, B_1y_2 +B_2y_{m_1+1}, y_{m_1+2}, \dots, y_m\}.$$

Since $y_h \notin L^{h+2},$ it follows that $$ \alpha_{2,h}\neq 0,
\quad \alpha_{i,h}=0 \quad \mbox{for}\quad  i>2.$$

Consider the product $$[[y_{h-1}, y_1], y_1] = \frac 1 2 [
y_{h-1}, [ y_1, y_1]] = \frac 1 2 [y_{h-1},
\sum\limits_{i=2}^n\beta_{1,i}x_i] .$$

The element $y_{h-1}$ belongs to $L^{h-1}$ and elements $x_2, x_3,
\dots, x_n$ lie in $L^3.$ Hence $\frac 1 2 [y_{h-1},
\sum\limits_{i=2}^n\beta_{1,i}x_i] \in L^{h+2}.$ Since $y_h \notin
L^{h+2},$  we obtain that $[[y_{h-1}, y_1], y_1]
=\sum\limits_{j\geq h+1}(*)y_j.$

On the other hand,
$$[[y_{h-1}, y_1], y_1] = [x_2, y_1]
= \alpha_{2,h}y_h
 + \sum\limits_{j=  h+1}^m\alpha_{2,j}y_j.$$

Comparing the coefficients at the basic elements we obtain $
\alpha_{2,h}=0,$ which is a contradiction with the assumption that
the superalgebra $L$ has nilindex equal to $n+m$ and therefore the
assertion of the theorem is proved.
\end{proof}

\textbf{Case $y_2 \in L^3.$} Then $B_2=0$ and the following
theorem is true.

\begin{thm}\label{t3}
Let $L=L_0 \oplus L_1$  be a Leibniz superalgebra from
$Leib_{n,m}$ with characteristic sequence equal to $(n_1, \dots,
n_k | m_1, \dots, m_s),$ where $n_1\leq n-2, \ m_1\leq m-1$ and
let $dim(L^3)_0 = n-1$ with $y_2 \in L^3.$ Then $L$ has a nilindex
less than $n+m.$
\end{thm}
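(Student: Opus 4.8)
The plan is to mirror the strategy of Theorem \ref{t2}, but adapted to the degenerate situation $B_2=0$, which by hypothesis means $y_2\in L^3$. First I would fix the filtration data: since $y_2\in L^3$ but (as established before the theorem) $x_2$ is first produced at level $h$ with $[y_{h-1},y_1]=x_2$ after the normalization $x_2'=\sum_{j=2}^n\beta_{h-1,j}x_j$, I would re-examine where $y_2$ and the remaining odd generators enter the lower central series. The condition $B_2=0$ forces $B_1\neq0$, so $y_2$ itself lies in $L^3$, which changes the bookkeeping of the descending sequence compared to the $y_2\notin L^3$ case. I expect Lemma \ref{l3} (giving $p=1$) and the product formula of Lemma \ref{le2} to remain available, since their proofs do not use the value of $B_2$; I would confirm this explicitly before invoking them.

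**The core contradiction.** As in Theorem \ref{t2}, I would assume for contradiction that the nilindex equals $n+m$, so that $\dim L^k=n+m-k$ for all $k$, and then hunt for a forced vanishing coefficient. The key computation should again be the associator identity applied to $[[y_{h-1},y_1],y_1]$. Using the Leibniz superidentity one writes
$$[[y_{h-1},y_1],y_1]=\tfrac12[y_{h-1},[y_1,y_1]]=\tfrac12\Big[y_{h-1},\sum_{i=2}^n\beta_{1,i}x_i\Big].$$
Since $y_{h-1}\in L^{h-1}$ and every $x_i\in L^2$ (indeed $x_i\in L^3$ for the relevant range), the right-hand side lands in a high enough power of $L$ that the coefficient of $y_h$ must vanish. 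On the other hand $[[y_{h-1},y_1],y_1]=[x_2,y_1]$, whose expansion begins with $\alpha_{2,h}y_h$. Comparing coefficients of $y_h$ should yield $\alpha_{2,h}=0$, contradicting the requirement that $y_h$ be produced at the correct level to sustain nilindex $n+m$.

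**The main obstacle.** The delicate point is that with $y_2\in L^3$, the element $y_h$ can now be generated not only through $[x_2,y_1]$ but possibly through products involving $y_2$ itself or through the interaction encoded by $B_1y_2$ being an internal element of $L^3$ rather than a generator-level quantity. So the clean statement "$\alpha_{2,h}\neq0$ and $\alpha_{i,h}=0$ for $i>2$" used in Theorem \ref{t2} may fail here, and I would need to rule out alternative production routes for $y_h$. Concretely, I would analyze how $B_1y_2$ propagates: compute $[B_1y_2,y_1]$ and the brackets $[y_2,x_1]$, $[x_i,y_2]$ to see whether they can supply the $y_h$ component that $[x_2,y_1]$ is forced to lose. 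The hard part is therefore the case analysis showing that no such compensating term exists, i.e. that the filtration level of $y_2$ is too high (being already in $L^3$) for any product it enters to reach down to level $h$ with a nonzero $y_h$-coefficient.

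**Finishing.** Once the compensating routes are excluded, the coefficient comparison of $y_h$ in the two expressions for $[[y_{h-1},y_1],y_1]$ gives the desired contradiction exactly as in Theorem \ref{t2}, so that the assumed nilindex $n+m$ is impossible and $L$ must have nilindex strictly less than $n+m$. I would also double-check the boundary subcase $h=m_1+1$ separately, since there $y_h$ sits at a gradation boundary and the left-hand computation $[[y_{h-1},y_1],y_1]$ may degenerate to zero, which only strengthens the contradiction rather than weakening it.
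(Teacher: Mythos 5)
Your plan has a genuine gap, and it sits exactly where you flag it yourself: excluding the ``compensating routes'' for $y_h$ is not a technical afterthought but the entire content of the theorem, and your proposal leaves it open. Worse, the scaffolding you intend to import is not actually available in this case. The level $h$ with the normalization $[y_{h-1},y_1]=x_2$, the product formula of Lemma \ref{le2}, and Lemma \ref{l3} were all derived \emph{inside} the case $y_2\notin L^3$: their statements and proofs rest on the explicit spanning sets of $L^h,\dots,L^{h+p+1}$, in which the combination $B_1y_2+B_2y_{m_1+1}$ with $B_2\neq 0$ survives while the individual elements $y_3,\dots,y_{h-1}$ leave the series one level at a time. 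That bookkeeping is destroyed precisely when $y_2\in L^3$, so the check you promise to perform before invoking these lemmas would in fact fail, and with it the key claim $\alpha_{2,h}\neq 0$, $\alpha_{i,h}=0$ for $i>2$ on which your coefficient comparison rests.

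The paper's proof goes a completely different and much shorter way: no $h$, no Lemma \ref{le2} or \ref{l3}. Under the nilindex assumption, $y_2\in L^3$ forces the element leaving $L^2$ to be $y_{m_1+1}$, whence $\alpha_{1,m_1+1}\neq 0$ and $\alpha_{i,m_1+1}=0$ for $i\geq 2$. One then shows the series stabilizes immediately: $y_2\in L^4$, since its only sources are the products $[x_i,y_1]$ with $i\geq 2$ and every such $x_i$ lies in $L^3$; $x_2\in L^4$, since $x_2\notin L_0^2$ rules out $[y_{m_1+1},y_1]=\frac12[x_1,[y_1,y_1]]$ as a source; and $y_{m_1+2}\in L^4$, because $[[x_1,y_1],x_1]=[x_1,[y_1,x_1]]+[[x_1,x_1],y_1]=[x_1,y_2]+\bigl[\sum_{i\geq3}(*)x_i,\,y_1\bigr]\in L^4$, while its expansion $\sum_j\alpha_{1,j}[y_j,x_1]$ carries the nonzero coefficient $\alpha_{1,m_1+1}$ at $y_{m_1+2}$. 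Together with the automatic memberships of the remaining basis elements this gives $L^4=L^3$, contradicting nilpotency. This also shows why your plan cannot be repaired along the proposed lines: the contradiction materializes already at the fourth term of the lower central series, so the filtration structure needed even to define your $h$ and your $y_h$ never exists.
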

\begin{proof}
We shall prove the assertion of the theorem by contrary method,
i.e. we assume that nilindex of the superalgebra $L$ equal to
$n+m.$ The condition $y_2 \in L^3$ implies
$$L^3 = \{x_2, x_3, \dots, x_n, y_2, \dots,
y_{m_1}, y_{m_1+2}, \dots, y_m\}.$$
 Then $\alpha_{1, m_1+1} \neq 0$ and $\alpha_{i, m_1+1} =
 0$ for $i \geq 2.$
The element $y_2$ is generated from products $[x_i, y_1],$ $i\geq
2$ which implies $y_2 \in L^4.$ Since $[y_{m_{1} +1},
y_1]=[[x_1,y_1],y_1]=\frac{1}{2}[x_1,[y_1,y_1]]=\frac{1}{2}[x_1,\sum(*)x_i]$
and $x_2$ is a generator of the Leibniz algebra $L_0$ then $x_2$
can not generated from the product $[y_{m_{1} +1}, y_1].$ Thereby
$x_2$ also belongs to $L^4.$

Consider the equality
$$[[x_1, y_1], x_1] = [x_1,[ y_1, x_1]] + [[x_1, x_1], y_1] = [x_1, y_2]
-[\sum\limits_{i\geq 3}(*)x_i, y_1].$$

From this it follows that the product $[[x_1, y_1], x_1]$ belongs
to $L^5$ (and therefore belongs to $L^4$).

On the other hand,
$$[[x_1, y_1], x_1] =  [\sum\limits_{j=2}^m \alpha_{1,j} y_j, x_1] =
\alpha_{1,2}y_3 + \dots + \alpha_{1,m_1-1}y_{m_1}
+\alpha_{1,m_1+1}y_{m_1+2} + \dots +\alpha_{1,m-1}y_{m}.$$

Since $\alpha_{1, m_1+1} \neq 0,$ we obtain that $y_{m_1+2} \in
L^4.$ Thus, we have $L^4 = \{x_2, x_3, \dots, x_n, \\ y_2, \dots,
y_{m_1}, y_{m_1+2}, \dots, y_m\},$ that is $L^4 = L^3.$ It is a
contradiction to nilpotency of the superalgebra $L.$

Thus, we get a contradiction with assumption that the superalgebra
$L$ has nilindex equal to $n+m$ and therefore the assertion of the
theorem is proved.
\end{proof}

From Theorems \ref{t2} and \ref{t3} we obtain that Leibniz
superalgebra $L$ with condition $dim (L^3)_0 = n-1$ has nilindex
less than $n+m.$

The investigation of the Leibniz superalgebra with property $dim
(L^3)_0 = n-2$ shows that the restriction to nilindex depends on
the structure of the Leibniz algebra $L_0.$ Below we present some
necessary remarks on nilpotent Leibniz algebras.

Let $A = \{z_1, z_2, \dots, z_n\}$ be an $n$-dimensional nilpotent
Leibniz algebra of nilindex $l$ ($l < n$). Note that algebra $A$
is not single-generated.

\begin{prop} \label{c1} \cite{C-G-O-Kh1}
Let $gr(A)$ be a naturally graded non-Lie Leibniz algebra. Then
$dim A^3  \leq n-4.$ \end{prop}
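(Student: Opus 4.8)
The plan is to translate the inequality $\dim A^3 \le n-4$ into a statement about the natural gradation and then eliminate the only borderline configuration. Since $gr(A)_i = A^i/A^{i+1}$ we have the bookkeeping identity $\dim A^3 = n - \dim gr(A)_1 - \dim gr(A)_2$, so the claim is equivalent to $\dim gr(A)_1 + \dim gr(A)_2 \ge 4$. Two of the needed units come for free: as $A$ has nilindex $l < n$ it is not single-generated, so $\dim gr(A)_1 = \dim(A/A^2) \ge 2$; and as $A$ is non-Lie it is in particular non-abelian, whence $A^2 \ne 0$ and the strict descent of the lower central series forces $A^2 \supsetneq A^3$, i.e. $\dim gr(A)_2 \ge 1$. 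If $\dim gr(A)_1 \ge 3$ we are already done, and $\dim gr(A)_2 = 0$ cannot occur; thus the whole problem collapses to excluding the single configuration $\dim gr(A)_1 = 2$, $\dim gr(A)_2 = 1$.

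For this borderline case I would show that such a shape is incompatible with $gr(A)$ being naturally graded and non-Lie in the operative regime $n_1 \le n-2$. Write $gr(A)_1 = \langle e_1, e_2\rangle$ and $gr(A)_2 = \langle e_3\rangle$, and set $[e_i,e_j] = a_{ij}e_3$. The engine of the argument is the fact recorded earlier that every symmetric element $[x,y]+[y,x]$, and in particular every square $[x,x]$, lies in the right annihilator $\mathcal{R}(A)$. If the degree-two bracket has nonzero symmetric part (some $a_{11}$, $a_{22}$, or $a_{12}+a_{21}$ is nonzero), then $e_3 \in \mathcal{R}(A)$, so $[e_1,e_3]=[e_2,e_3]=0$; substituting $e_3 = a^{-1}[e_i,e_i]$ into the Leibniz superidentity then shows that $[e_3,e_2]$ is a scalar multiple of $[e_3,e_1]$, so $\dim gr(A)_k \le 1$ for every $k \ge 2$ and the second generator decouples. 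The algebra is then driven into the filiform shape with characteristic sequence $(n-1,1)$, which contradicts $n_1 \le n-2$.

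The step I expect to be the genuine obstacle is the complementary subcase in which the degree-two bracket is antisymmetric ($a_{11}=a_{22}=0$, $a_{12}=-a_{21}\ne 0$), so that $gr(A)$ agrees with a Lie algebra through degree two and any failure of antisymmetry must first surface in degree $\ge 3$. Here $e_3$ need not lie in $\mathcal{R}(A)$, and $gr(A)_3$ can a priori be two-dimensional (as in the free two-generated nilpotent Lie algebra), so the cheap degree-two computation no longer suffices. I would again exploit that all symmetric elements lie in $\mathcal{R}(A)$, but now tracked degree by degree: using the Leibniz superidentity and the constraint $\dim gr(A)_2 = 1$, I would argue inductively that no nonzero symmetric combination of homogeneous elements can ever be produced, forcing every bracket to be antisymmetric and hence $gr(A)$ to be a Lie algebra, contradicting the non-Lie hypothesis. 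Carrying this symmetry-propagation bookkeeping cleanly through all degrees, rather than the mechanical degree-two calculation, is where the real work lies.
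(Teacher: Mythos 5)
First, a caveat on the comparison itself: the paper contains no proof of this proposition --- it is quoted from \cite{C-G-O-Kh1} --- so your proposal can only be judged against the statement and against the way the paper later uses it (namely, to write down the antisymmetric table (4)). Your bookkeeping reduction is correct, and so is your first case: if the bracket $gr(A)_1\times gr(A)_1\to gr(A)_2$ has nonzero symmetric part, then over $\CC$ some square spans $gr(A)_2$, that spanning vector lies in $\mathcal{R}(gr(A))$, and the identity $[u,[e_1,e_2]]=[[u,e_1],e_2]-[[u,e_2],e_1]$, whose left side vanishes because $gr(A)_2\subseteq \mathcal{R}(gr(A))$, propagates the relation $[u,e_2]\in\langle [u,e_1]\rangle$ along the chain $u\mapsto [u,e_1]$; hence $\dim gr(A)_k\le 1$ for all $k\ge 2$ and $n=2+(l-2)=l$. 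One small correction there: the contradiction you should invoke is with the standing hypothesis $l<n$; the condition $n_1\le n-2$ is part of the superalgebra context, not a hypothesis of this proposition.

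The second case is where your proof breaks, and the break is not repairable: the claim you hope to establish there --- that antisymmetry in degree two propagates, so that $gr(A)$ must be Lie --- is false. Consider the six-dimensional algebra $A=\langle e_1,\dots,e_6\rangle$ with nonzero products
$$[e_2,e_1]=e_3=-[e_1,e_2],\quad [e_3,e_1]=e_4=-[e_1,e_3],\quad [e_3,e_2]=e_5=-[e_2,e_3],$$
$$[e_3,e_3]=e_6,\quad [e_4,e_2]=[e_2,e_4]=-e_6,\quad [e_1,e_5]=2e_6,$$
all other products of basis vectors being zero. The bracket is degree-additive for the gradation $G_1=\langle e_1,e_2\rangle$, $G_2=\langle e_3\rangle$, $G_3=\langle e_4,e_5\rangle$, $G_4=\langle e_6\rangle$, so every Leibniz identity among basis vectors of total degree $\ge 5$ holds trivially, and the finitely many triples of total degree $\le 4$ can be checked directly (they do hold; for instance $[e_3,[e_2,e_1]]=[[e_3,e_2],e_1]-[[e_3,e_1],e_2]$ reads $e_6=0-(-e_6)$). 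Since $G_{k+1}=[G_k,G_1]$, this algebra is naturally graded; it is two-generated, of nilindex $5<6=n$, its characteristic sequence starts with $n_1=4=n-2$, and it is non-Lie, yet its degree-one bracket is antisymmetric and $\dim A^3=3=n-3>n-4$. The obstruction to your symmetry-propagation induction appears exactly at degree four: the Leibniz identity forces $[e_3,e_3]=[e_5,e_1]-[e_4,e_2]$, a symmetric element which is \emph{not} forced to vanish. So no argument can close your Case 2 from the hypotheses stated here; indeed this example contradicts the proposition itself under the only hypotheses visible in this paper ($\dim A=n$, nilindex $l<n$, not single-generated), and whatever additional assumptions make the statement correct in \cite{C-G-O-Kh1} are not reproduced in the quotation. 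What survives --- and is all the paper actually uses to derive (4) --- is precisely your Case 1: when $\dim A^3=n-3$, the bracket on $gr(A)_1$ must be antisymmetric.
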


The result on nilindex of the superalgebra under the condition
$dim(L^3)_0 = n-2$ is established in the following two theorems.

\begin{thm}\label{t4} Let $L=L_0 \oplus L_1$ be a Leibniz superalgebra from $Leib_{n,m}$
with characteristic sequence $(n_1, \dots, n_k | m_1, \dots m_s),$
where $n_1\leq n-2, \ m_1\leq m-1,$ $dim(L^3)_0 = n-2$ and $dim
L_0^3 \leq n-4.$ Then $L$ has a nilindex less than $n+m.$
\end{thm}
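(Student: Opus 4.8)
The plan is to argue by contradiction: suppose $L$ has nilindex $n+m$, so that $\dim L^k = n+m-k$ for every $2 \le k \le n+m$. By Lemma \ref{l1} we may take $\{x_1,y_1\}$ as generators, whence $(L^2)_0=\{x_2,\ldots,x_n\}$ and $(L^2)_1=\{y_2,\ldots,y_m\}$, with the odd part acted on by $x_1$ through the shift (1). Since $\dim(L^3)_0=n-2$ and $\dim L^3=n+m-3$, the odd part cannot drop at this step: $\dim(L^3)_1=m-1$, and as it is a subspace of the $(m-1)$-dimensional $(L^2)_1$ we get $(L^3)_1=(L^2)_1=\{y_2,\ldots,y_m\}$. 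Thus the entire odd part survives into $L^3$, and the single dimension lost from $L^2$ to $L^3$ is purely even.

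Next I would locate the source of the even elements of $L^3$. An easy induction gives $L_0^k\subseteq(L^k)_0$ for all $k$; in particular $L_0^3=[L_0^2,L_0]\subseteq(L^3)_0$, while the hypothesis yields $\dim L_0^3\le n-4<n-2=\dim(L^3)_0$. Hence at least two of the even directions spanning $(L^3)_0$ do not come from the purely even series $L_0^3$, and by the decomposition $(L^3)_0=L_0^3+[[L_1,L_1],L_0]+[(L^2)_1,L_1]$ they must be produced by brackets involving the odd part. I would make this precise in the adapted basis using (2), recording which $x_i$ are forced to arise from products $[y_p,y_q]$ rather than from $L_0$ itself.

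The core of the argument is to propagate this information down the lower central series. The excess directions of $(L^k)_0$ beyond $L_0^k$ are always supplied by the second summand of $(L^{k+1})_0=[(L^k)_0,L_0]+[(L^k)_1,L_1]$, i.e.\ by odd-odd brackets, so the two excess directions already present at level three must be sustained one dimension at a time all the way down. These directions originate from products $[y_p,y_q]$ and are therefore tied to the survival of the odd part, which is governed by the shift $R_{x_1}$ of (1) together with the mixed products $[(L^k)_0,L_1]$. Following the computations behind Theorems \ref{t2} and \ref{t3}, I would use the Leibniz superidentity to relate the coefficients $\alpha_{i,j},\beta_{i,j}$ of (2) to this shift and show that the odd-odd injections cannot keep pace: at the step where they are exhausted the even part drops by at least two (or stabilises, so that $L^{k+1}=L^k$), contradicting $\dim L^{k+1}=\dim L^k-1$ and the nilpotency of $L$. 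The main obstacle is exactly this simultaneous bookkeeping --- balancing the even-even descent, rigidly constrained by $\dim L_0^3\le n-4$, against the odd-fed padding terms --- and verifying that the two descents cannot be interleaved so as to lose precisely one dimension at every level.
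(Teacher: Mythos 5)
Your opening is sound and agrees with the paper's setup: assuming nilindex $n+m$ forces $\dim L^k=n+m-k$, the odd part survives into $L^3$ so that $L^3=\{x_3,\dots,x_n,y_2,\dots,y_m\}$, and $\dim L_0^3\le n-4$ guarantees two even basis directions of $(L^3)_0$ (the paper takes $x_3,x_4\notin L_0^3$ WLOG) that cannot be produced by the even part alone. But from that point on your text is a plan, not a proof: the entire contradiction is contained in the sentences ``show that the odd-odd injections cannot keep pace'' and ``at the step where they are exhausted the even part drops by at least two (or stabilises),'' and you yourself flag the ``simultaneous bookkeeping'' as the main obstacle without resolving it. That claim is exactly what requires proof, and nothing in your argument rules out the even and odd drops being interleaved one dimension at a time. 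There is also an imprecision in the bookkeeping itself: the excess of $(L^{k+1})_0$ over $L_0^{k+1}$ need \emph{not} be supplied directly by odd-odd brackets, because $[z,x_1]\in[(L^k)_0,L_0]$ with $z$ an excess even element (itself of odd-odd origin) may again lie outside $L_0^{k+1}$. Controlling precisely this possibility is a key step of the paper that your decomposition glosses over.

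The paper closes the gap with concrete two-sided bracket evaluations that your sketch never reaches. It sets $h$ by $x_3\in L^{h+1}\setminus L^{h+2}$ and first proves $x_3\in L_0^2$: if $x_3\notin L_0^2$ then $h\ge 3$; the case $h=3$ is excluded by computing $[[x_2,y_1],y_1]$ both directly (giving $\alpha_{2,2}\beta_{2,3}x_3+\cdots$) and as $\frac12[x_2,[y_1,y_1]]$ (which has no $x_3$-component), and the case $h\ge 4$ is excluded via $[y_{h-2},y_2]=[[y_{h-2},y_1],x_1]-[[y_{h-2},x_1],y_1]=-\beta_{h-1,3}x_3+\cdots$, which forces $\beta_{h-1,3}=0$, a contradiction. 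Knowing $x_3\in L_0^2\setminus L_0^3$ then guarantees that even brackets of $x_3$ fall into $L_0^3$, so $x_4\notin L_0^3$ must arise from products $[y_j,y_1]$; this yields $\alpha_{3,h}\neq 0$ and a second parameter $p\ge 3$ with $x_4\in L^{h+p}\setminus L^{h+p+1}$. The value $p=3$ is killed by the identity $[[x_3,y_1],y_1]=\frac12[x_3,[y_1,y_1]]$, and for $p\ge 4$ the identity $[y_{h+p-4},y_2]=[[y_{h+p-4},y_1],x_1]-[[y_{h+p-4},x_1],y_1]$ shows $x_4\in L^{h+p+1}$, the final contradiction. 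These computations (resting on $y_2=[y_1,x_1]$, $[y_1,y_1]\in L_0^2$, and the graded Leibniz identity) are the actual mechanism by which ``the two descents cannot be interleaved''; without them, or an explicit substitute, your proposal does not prove the theorem.
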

\begin{proof}
Let us assume the contrary, i.e. the nilindex of the superalgebra
$L$ is equal to $n+m.$ According to the condition $dim(L^3)_0 =
n-2$ we have
 $$ L^3
= \{x_3, x_4, \dots, x_n, y_2, y_3, \dots, y_m\}.$$

From the condition $dim L_0^3 \leq n-4$ it follows that there
exist at least two basic elements, that do not belong to $L_0^3.$
Without loss of generality, one can assume $x_3, x_4 \notin
L_0^3.$

 Let $h$ be a natural number
such that $x_3 \in L^{h+1}\setminus L^{h+2},$ then we have
 $$L^{h+1} = \{x_3, x_4, \dots, x_n, y_h, y_{h+1}, \dots,
y_m\}, \ h \geq 2, \ \beta_{h-1, 3} \neq 0.$$
$$L^{h+2} = \{x_4, \dots, x_n, y_h, y_{h+1},
\dots, y_m\}.$$

Let us suppose $x_3 \notin L_0^2.$ Then we have that $x_3$ can not
be obtained by the products $[x_i, x_1],$ with $2 \leq i\leq n.$
Therefore, it is generated by products $[y_j, y_1], 2 \leq j \leq
m,$ which implies $h \geq 3$ and $\alpha_{2,2}\neq 0.$

If $h=3,$ then $\beta_{2,3}\neq 0.$

Consider the chain of equalities
$$[[x_2, y_1], y_1] = [\sum \limits_{j=2}^m \alpha_{2,j}y_j, y_1]
=  \sum \limits_{j=2}^m \alpha_{2,j}[y_j, y_1]=
\alpha_{2,2}\beta_{2,3}x_3 + \sum \limits_{i\geq 4}(*)x_i.$$

On the other hand,
$$[[x_2, y_1], y_1] = \frac 1 2 [x_2, [ y_1, y_1]] =  \frac 1 2 [x_2,
\sum \limits_{i=2}^n\beta_{1,i}x_i] = \frac 1 2 \sum
\limits_{i=2}^n\beta_{1,i} [x_2, x_i] =  \sum \limits_{i\geq
4}(*)x_i.$$

Comparing the coefficients at the corresponding basic elements, we
get a contradiction with $ \beta_{2,3} = 0.$ Thus, $h \geq 4.$

Since $y_2 \in L^3$ and $h \geq 4$ we have $y_{h-2} \in L^{h-1},$
which implies $[y_{h-2}, y_2] \in L^{h+2} = \{x_4, \dots, x_n,
y_h, y_{h+1}, \dots, y_m\}.$ It means that in the decomposition
$[y_{h-2}, y_2]$ the coefficient at the basic element $x_3$ is
equal to zero.

On the other hand,
$$[y_{h-2}, y_2] = [y_{h-2}, [y_1, x_1]] = [[y_{h-2},
y_1], x_1] - [[y_{h-2}, x_1], y_1] =$$ $$=[ \sum\limits_{i=2}^n
\beta_{h-2,i}x_i, x_1] - [y_{h-1}, y_1]= - \beta_{h-1,3}x_3 +
\sum\limits_{i\geq 4}(*)x_i.$$

Hence, we get $\beta_{h-1,3} = 0,$ which is obtained from the
assumption $x_3 \notin L_0^2.$

Therefore, we have $x_3, x_4 \in L_0^2\setminus L_0^3.$ The
condition $x_4 \notin L_0^3$ deduce that $x_4$ can not be obtained
by the products $[x_i, x_1],$ with $3 \leq i\leq n.$ Therefore, it
is generated by products $[y_j, y_1], h \leq j \leq m.$ Hence,
$L^{h+3} = \{x_4, \dots, x_n, y_{h+1}, \dots, y_m\}$ and $y_h \in
L^{h+2} \setminus L^{h+3},$ which implies $\alpha_{3,h} \neq 0.$

Let $p$ ($ p \geq 3$) be a natural number such that $x_4 \in
L^{h+p} \setminus L^{h+p+1}.$

Suppose that $p=3.$ Then $\beta_{h,4}\neq 0.$

Consider the chain of equalities
$$[[x_3, y_1], y_1] = [\sum \limits_{j=h}^m \alpha_{3,j}y_j, y_1]
=  \sum \limits_{j=h}^m \alpha_{3,j}[y_j, y_1]=
\alpha_{3,h}\beta_{h,4}x_4 + \sum \limits_{i\geq 5}(*)x_i.$$

On the other hand,
$$[[x_3, y_1], y_1] = \frac 1 2 [x_3, [ y_1, y_1]] =  \frac 1 2[x_3,
\sum \limits_{i=2}^n\beta_{1,i}x_i] =  \frac 1 2 \sum
\limits_{i=2}^n\beta_{1,i} [x_3, x_i] = \sum \limits_{i\geq
5}(*)x_i.$$

Comparing the coefficients at the corresponding basic elements in
these equations we get $\alpha_{3,h}\beta_{h,4} = 0,$ which
implies $ \beta_{h,4} = 0.$ It is a contradiction with assumption
$p=3.$ Therefore, $p\geq 4$ and for the powers of descending lower
sequences we have
$$L^{h+p-2} = \{x_4, \dots, x_n,
y_{h+p-4}, \dots, y_m\},$$
$$L^{h+p-1} = \{x_4, \dots, x_n,
y_{h+p-3}, \dots, y_m\},$$ $$L^{h+p} = \{x_4, \dots, x_n,
y_{h+p-2}, \dots, y_m\},$$ $$L^{h+p+1} = \{x_5, \dots, x_n,
y_{h+p-2}, \dots, y_m\}.$$

It is easy to see that in the decomposition $[y_{h+p-3}, y_1] =
\sum\limits_{i=4}^n \beta_{h+p-3,i}x_i$ we have $\beta_{h+p-3,4}
\neq 0.$

Consider the equalities
$$[y_{h+p-4}, y_2] = [y_{h+p-4}, [y_1, x_1]] = [[y_{h+p-4},
y_1], x_1] - [[y_{h+p-4}, x_1], y_1] =$$ $$=[ \sum\limits_{i=4}^n
\beta_{h+p-3,i}x_i, x_1] - [y_{h+p-3}, y_1]= - \beta_{h+p-3,4}x_4
+ \sum\limits_{i\geq 5}(*)x_i.$$

Since $y_{h+p-4} \in L^{h+p-2}, y_2 \in L^3$ and $\beta_{h+p-3,4}
\neq 0,$ then the element $x_4$ should lie in $L^{h+p+1},$ but it
contradicts to $L^{h+p+1} = \{x_5, \dots, x_n, y_{h+p-2}, \dots,
y_m\}.$ Thus, the superalgebra $L$ has a nilindex less than $n+m.$
\end{proof}

From Theorem \ref{t4} we conclude that Leibniz superalgebra $L=
L_0 \oplus L_1$ with the characteristic sequence $(n_1, \dots,
n_k| m_1, \dots, m_s),$ where $n_1\leq n-2, \ m_1\leq m-1$ and
nilindex $n+m$ can appear only if $dim L_0^3 \geq n-3.$ Taking
into account the condition $n_1 \leq n-2$ and properties of
naturally graded subspaces $gr(L_0)_1,$ $gr(L_0)_2$ we get $dim
L_0^3  = n-3.$

Let $dim L_0^3 =n-3.$ Then $$gr(L_0)_1 = \{\overline{x}_1,
\overline{x}_2\}, \ gr(L_0)_2 = \{\overline{x}_3\}.$$

From Proposition \ref{c1} the naturally graded Leibniz algebra
$gr(L_0)$ is a Lie algebra, i.e. the following multiplication
rules hold
$$
\left\{ \begin{array}{l} [\overline{x}_1,\overline{x}_1]=0, \\{}
[\overline{x}_2,\overline{x}_1]=\overline{x}_3,   \\{}
 [\overline{x}_1,\overline{x}_2]=-\overline{x}_3, \\ {}[\overline{x}_2,\overline{x}_2]=0.
\end{array}\right.
$$

Using these products for the corresponding products in the Leibniz
algebra $L_0$ with the basis $\{x_1, x_2, \dots, x_n\}$ we have
$$
\left\{ \begin{array}{l} [x_1,x_1]=\gamma_{1,4}x_4 +
\gamma_{1,5}x_5 + \dots + \gamma_{1,n}x_n,
\\{} [x_2,x_1]=x_3, \\{} [x_1,x_2]=-x_3 + \gamma_{2,4}x_4 + \gamma_{2,5}x_5 + \dots
+ \gamma_{2,n}x_n, \\ {}[x_2,x_2]=\gamma_{3,4}x_4 +
\gamma_{3,5}x_5 + \dots + \gamma_{3,n}x_n.
\end{array} \right. \eqno(4)$$
\begin{thm}\label{t5}
Let $L=L_0\oplus L_1$ be a Leibniz superalgebra from $Leib_{n,m}$
with characteristic sequence $(n_1,  \dots, n_k | m_1, \dots,
m_s),$ where $n_1\leq n-2, \ m_1\leq m-1,$ $dim (L^3)_0=n-2$ and
$dimL_0^3=n-3.$ Then $L$ has a nilindex less than $n+m.$
\end{thm}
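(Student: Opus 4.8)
The strategy mirrors the proofs of Theorems \ref{t2}--\ref{t4}: assume for contradiction that the nilindex of $L$ equals $n+m$, so that $\dim L^k = n+m-k$ for all $2 \leq k \leq n+m$, and derive a collision in the descending central series (two consecutive powers coincide, or an element cannot appear where nilpotency forces it). The new ingredient here is that we now know the precise structure of the even part: by Proposition \ref{c1} the naturally graded algebra $gr(L_0)$ is a Lie algebra with $gr(L_0)_1 = \{\overline{x}_1, \overline{x}_2\}$, $gr(L_0)_2 = \{\overline{x}_3\}$, giving the explicit products (4). In particular $x_3$ is obtained (up to the filtration) only from $[x_2,x_1]=x_3$, and all of $x_4,\dots,x_n$ sit in $L_0^3$. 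The plan is to exploit relation (4) together with the products (2) and the Leibniz superidentity to pin down how the odd elements $y_j$ propagate through the central series, then show these constraints are incompatible with $\dim L^k = n+m-k$.

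\textbf{Key steps.} First I would fix the filtration data: since $\dim(L^3)_0 = n-2$ we have $L^3 = \{x_3,\dots,x_n,y_2,\dots,y_m\}$, and by relation (4) the element $x_3 = [x_2,x_1]$ while $x_4,\dots,x_n \in L_0^3$. Next I would track the odd part. Using the superidentity in the form $[[x_i,y_1],y_1] = \tfrac12[x_i,[y_1,y_1]]$ (as in the earlier proofs) together with $[y_1,y_1] = \sum_j \beta_{1,j}x_j$, I would control which $x_j$ can be generated from odd-odd brackets $[y_i,y_1]$, and which $y_j$ arise from even-odd brackets $[x_i,y_1]$. The decisive computation should be an analogue of the $[y_{h-2},y_2]$ expansion used in Theorem \ref{t4}: writing $y_2 = [y_1,x_1]$ and applying the superidentity
$$[y_{i},y_2] = [y_i,[y_1,x_1]] = [[y_i,y_1],x_1] - [[y_i,x_1],y_1],$$
one expresses a bracket that must lie deep in the filtration in two ways, once via the $\beta$-coefficients of $[y_i,y_1]\in L_0$ pushed forward by $R_{x_1}$, and once via $[y_{i+1},y_1]$. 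Comparing the coefficient of $x_3$ (or the first surviving $x_t$) forces a $\beta$-coefficient to vanish, contradicting the requirement that $x_3$ be generated at the expected step. I would split into the subcases according to where the odd generator $y_1$ first feeds $x_3$ into the even part --- i.e. whether $x_3$ is reached through $[y_{h-1},y_1]$ for the appropriate $h$ --- exactly paralleling the $h$ and $p$ bookkeeping of Theorem \ref{t4}, but now with the rigid two-dimensional top layer $\{x_1,x_2\}$ of $L_0$ doing the work that Proposition \ref{c1} previously only bounded.

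\textbf{The main obstacle.} The difficulty will not be any single identity but the case analysis: with $\dim L_0^3 = n-3$ the even part is as large as it can possibly be while still admitting nilindex $n+m$, so the contradiction is tighter and must be extracted from the interplay of (4) with the odd brackets rather than from a crude dimension count. Concretely, the hard part is showing that no choice of the structure constants $\gamma_{i,j}$ in (4) and $\alpha_{i,j},\beta_{i,j}$ in (2) lets the series $\dim L^k = n+m-k$ persist; one must verify that the element which is forced (by $R_{x_1}$ acting on the $y_j$) to lie one step too deep genuinely cannot be rescued by a compensating term from $[x_1,x_2] = -x_3 + \sum \gamma_{2,j}x_j$ or from $[x_1,x_1],[x_2,x_2]$. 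I would therefore organize the argument around the first index at which the odd chain $y_2,y_3,\dots$ stalls, exhibit the offending element explicitly, and close each branch by the same coefficient-comparison that yields $L^{k+1}=L^k$ or a vanishing leading $\beta$, contradicting nilpotency or the assumed nilindex.
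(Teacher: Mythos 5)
Your overall strategy is the one the paper uses: assume the nilindex is $n+m$, fix the filtration, introduce the parameter $h$ with $x_3\in L^{h+1}\setminus L^{h+2}$, and close the branches with exactly the two identities you name ($[[x_2,y_1],y_1]=\frac{1}{2}[x_2,[y_1,y_1]]$ for the small value of $h$, and $[y_{h-2},y_2]=[[y_{h-2},y_1],x_1]-[[y_{h-2},x_1],y_1]$ for $h\geq 4$). However, there is a genuine gap: your case analysis starts from ``where $y_1$ first feeds $x_3$ into the even part,'' which tacitly assumes that $x_3$ survives into $L^4$, i.e.\ that the element dropping out at the step from $L^3$ to $L^4$ is an odd one. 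Nothing in your sketch rules out the alternative $x_3\notin L^4$ (so that $x_3$ arises only from $[x_2,x_1]$ at level $3$ and dies immediately); in that case there is no $h\geq 3$, the element $y_{h-2}$ does not exist, and neither of your two identities applies. The paper spends the first third of its proof precisely on this case: if $x_3\notin L^4$, then $L^4$ still contains the whole odd span, and one shows that the two odd ``chain heads'' $y_2$ and $y_{m_1+1}$ --- which cannot be produced by $R_{x_1}$, since $[y_{m_1},x_1]=0$ and $y_1\notin L^2$ --- can no longer be generated at all: products $[y_t,x_b]$ and $[x_i,y_t]$ with $t,b\geq 2$ land too deep in the filtration, while for $[x_i,y_1]$ with $i\geq 3$ one writes $x_i$ as a combination of products (this is exactly where $\dim L_0^3=n-3$, forcing $x_i\in L_0^2$, enters) and uses the expansion $[[x_a,x_b],y_1]=[x_a,[x_b,y_1]]+[[x_a,y_1],x_b]$ --- an identity absent from your plan --- to exclude them. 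Hence a two-dimensional drop occurs at one step, contradicting $\dim L^k=n+m-k$; this is what forces $x_3\in L^4$ and $h\geq 3$ in the first place.

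A related omission concerns the branch $h=3$: your coefficient comparison only yields a contradiction if the leading term $\alpha'_{2,2}\beta'_{2,3}$ is known to be nonzero, i.e.\ if $[x_2,y_1]$ has a nonzero coefficient at the odd combination $B'_1y_2+B'_2y_{m_1+1}$ that disappeared at $L^4$, and if the bracket of that combination with $y_1$ carries $x_3$. Both nonvanishing statements are consequences of the analysis above (the disappeared combination can only have been generated by $[x_2,y_1]$, and $x_3\in L^4\setminus L^5$ can only be generated by bracketing that combination with $y_1$); without them the comparison degenerates to $0=0$. So your plan is structurally correct for $h\geq 3$, and your $h\geq 4$ computation matches Theorem \ref{t4}'s pattern as you intend, but the missing case $x_3\notin L^4$ and the missing nonvanishing bookkeeping are precisely the parts of the proof where the hypothesis $\dim L_0^3=n-3$ and the two-track structure of the odd part ($y_2$ versus $y_{m_1+1}$) do their work; as written, the proposal does not close the proof.
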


\begin{proof}
Let us suppose the contrary, i.e. the nilindex of the superalgebra
$L$ equals $n+m.$ Then from the condition $dim (L^3)_0=n-2$ we
obtain
$$ L^2 = \{x_2, x_3,\dots, x_n, y_2, \dots, y_m\},$$
$$L^3 = \{x_3, x_4, \dots, x_n, y_2, \dots, y_m\}.$$
$$L^4 \supset \{x_4, \dots, x_n, y_3, \dots, y_{m_1}, B_1y_2 + B_2y_{m_1+1}, y_{m_1+2}  \dots, y_m\}, \quad (B_1, B_2) \neq (0,0).$$

Suppose $x_3 \notin L^4.$ Then
$$L^4 = \{x_4, \dots, x_n, y_2, \dots, y_{m_1}, y_{m_1+1}, \dots, y_m\}.$$
Let $B'_1y_2 + B'_2y_{m_1+1}$ be an element which earlier
disappear in the descending lower sequence for $L$. Then this
element can not to be generated from the products $[x_i, y_1], \ 2
\leq i \leq n.$ Indeed, since $x_3 \notin L^4,$ the element can
not to be generated from $[x_2, y_1].$ Due to structure of $L_0$
the elements $x_i, (3 \leq i \leq n)$ are in $L_0^2,$ i.e. they
are generated by the linear combinations of the products of
elements from $L_0.$ The equalities
$$[[x_i, x_j], y_1] =  [x_i,[ x_j, y_1]] + [[x_i, y_1], x_j] =
[x_i,\sum\limits_{t=2}^m \alpha_{j,t}y_t] + [\sum\limits_{t=2}^m
\alpha_{i,t}y_t, x_i]$$ derive that the element $B'_1y_2 +
B'_2y_{m_1+1}$ can not be obtained by the products $[x_i, y_1], 3
\leq i \leq n.$ However, it means that $x_3\in L^4.$ Thus, we have
$$L^4 = \{x_3, x_4, \dots, x_n, y_3, \dots, y_{m_1}, B_1y_2 + B_2y_{m_1+1}, y_{m_1+2},  \dots, y_m\},$$ where $(B_1, B_2) \neq
(0,0)$ and $B_1B'_2 - B_2B'_1 \neq 0.$

 The simple analysis of descending lower sequences
$L^3$ and $L^4$ implies  $$[x_2, y_1] = \alpha'_{2,2}(B'_1y_2 +
B'_2y_{m_1+1}) + \alpha'_{2,m_1+1}(B_1y_2 + B_2y_{m_1+1})+
\sum\limits_{\begin{array}{c}j=3\\j\neq m_1+1 \end{array}} ^m
\alpha_{2,j}y_j,\quad \alpha'_{2,2} \neq 0.$$

Let $h$ be a natural number such that $x_3 \in L^{h+1}\setminus
L^{h+2},$ i.e.

$$L^h = \{x_3, x_4, \dots, x_n, y_{h-1}, y_h, \dots, y_{m_1}, B_1y_2 + B_2y_{m_1+1}, y_{m_1+2}, \dots, y_m\}, h
\geq 3,$$ $$L^{h+1} = \{x_3, x_4, \dots, x_n, y_h, y_{h+1},\dots,
y_{m_1}, B_1y_2 + B_2y_{m_1+1}, y_{m_1+2}, \dots, y_m\},$$
$$L^{h+2} = \{x_4, \dots, x_n, y_h, y_{h+1},\dots, y_{m_1}, B_1y_2 + B_2y_{m_1+1}, y_{m_1+2},  \dots, y_m\}.$$

If $h=3,$ then $[B'_1y_2 + B'_2y_{m_1+1}, y_1] = \beta'_{2,3}x_3 +
\sum\limits_{i\geq 4}(*)x_4,$ $\beta'_{2,3} \neq 0$ and we
consider the product
$$[[x_2, y_1], y_1] = [ \alpha'_{2,2}(B'_1y_2 +
B'_2y_{m_1+1}) +\alpha'_{2,m_1+1}(B_1y_2 + B_2y_{m_1+1})+
\sum\limits_{\begin{array}{c}j=3\\j\neq m_1+1 \end{array}}^m
\alpha_{2,j}y_j, y_1] =$$ $$= \alpha'_{2,2}[B'_1y_2 +
B'_2y_{m_1+1},y_1] + \alpha'_{2,m_1+1}[B_1y_2 + B_2y_{m_1+1},y_1]+
$$ $$+\sum\limits_{\begin{array}{c}j=3\\j\neq m_1+1 \end{array}}^m \alpha_{2,j}[y_j, y_1] =
\alpha'_{2,2} \beta'_{2,3}x_3 + \sum\limits_{i\geq 4}(*)x_4 .$$

 On the other hand, due to (4) we have
 $$[[x_2, y_1], y_1] = \frac 1 2 [x_2, [ y_1, y_1]] =
\frac 1 2 [x_2, \sum\limits_{i=2}^n\beta_{1,i}x_i] =
\sum\limits_{i\geq 4}(*)x_i.$$

Comparing the coefficients at the corresponding basic elements we
get equality $\alpha'_{2,2}\beta'_{2,3} = 0,$ i.e. we have a
contradiction with supposition $h=3.$

If $h \geq 4,$ then we obtain $\beta'_{h-1,3} \neq 0.$ Consider
the chain of equalities
$$[y_{h-2}, y_2] = [y_{h-2}, [y_1, x_1]] = [[y_{h-2}, y_1], x_1] -
[[y_{h-2}, x_1], y_1] =$$ $$= [
\sum\limits_{i=3}^n\beta_{h-2,i}x_i , x_1] - [y_{h-1}, y_1] = -
\beta_{h-1,3}x_3 + \sum\limits_{i\geq 4}(*)x_i.$$

Since $y_{h-2} \in L^{h-1}$ and $y_2 \in L^3$ then $x_3 \in
L^{h+2} = \{x_4, \dots, x_n, y_{h-1}, \dots, y_m\},$ which is a
contradiction with the assumption that the nilindex of $L$ is
equal to $n+m.$
\end{proof}

\begin{rem} In this subsection we used product $[y_1,x_1]=y_2.$ However, it is not difficult
to check that the obtained results are also true under the
condition $[y_1,x_1]=0.$
\end{rem}

\subsection{The case of both generators lie in $L_1$}

\begin{thm}\label{t6}
Let $L=L_0 \oplus L_1$  be a Leibniz superalgebra from
$Leib_{n,m}$ with characteristic sequence equal to $(n_1, \dots,
n_k | m_1,\dots, m_s),$ where $n_1\leq n-2, \ m_1\leq m-1$ and let
both generators lie in $L_1.$ Then $L$ has a nilindex less than
$n+m.$
\end{thm}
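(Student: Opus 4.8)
The setting is that both generators lie in $L_1$, so the odd part supplies two independent generators and $\dim(L^2)_0 = n$, $\dim(L^2)_1 = m-2$. My plan is to argue by contradiction: assume the nilindex equals $n+m$, which (as noted before the case split) is equivalent to $\dim L^k = n+m-k$ for all $2 \le k \le n+m$. The central structural fact I want to exploit is that since $L$ is two-generated with both generators odd, the entire even part $L_0$ must be produced from odd-odd brackets $[y_i, y_j]$ (via the inclusion $[L_1,L_1]\subseteq L_0$), because $[L_0,\cdot]$ can never create a new generator. In particular every $x_i \in L_0$ lies in $L^2$, so $L_0 = (L^2)_0$ is entirely generated through the odd part. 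This should already be very restrictive.

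First I would fix an adapted basis of $L_1$ via equation $(1)$, choosing some $x\in L_0$ whose right-multiplication realizes the Jordan structure $(m_1,\dots,m_s)$; since both generators are in $L_1$, I want to select two generators $y_1$ and (say) $y_{m_1+1}$ — or more precisely the first elements of two distinct Jordan strings — as the generating set, paralleling the normalization done in Lemma \ref{l1} for the mixed case. Then I would set up the analogues of the coefficient equations $(2)$ for the products $[y_i,y_j]$ and $[x_i,y_1]$, and track how the even basis elements $x_2,\dots,x_n$ first appear in the descending sequence $L^k$. The key tool is the Leibniz superidentity applied in the form $[[y_i,y_1],y_1]=\frac12[y_i,[y_1,y_1]]$ and $[y_i,y_j]=[y_i,[y_{j-1},x_1]]=[[y_i,y_{j-1}],x_1]-[[y_i,x_1],y_{j-1}]$, exactly as in the proofs of Theorems \ref{t2}--\ref{t5}; these let me compare two expressions for the same product and force coefficients to vanish.

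The main obstacle — and the heart of the argument — is that the even part $L_0$ is now a genuinely arbitrary nilpotent Leibniz algebra constrained only by $n_1\le n-2$, so I cannot assume the convenient low-dimensional structure $(4)$ that was available once $\dim L_0^3$ was pinned down. I expect the proof to proceed by first showing that $L_0$ must again be built so that $\dim(L^3)_0$ and $\dim L_0^3$ are controlled, then reducing to a dimension count: each new even basis element $x_i$ requires a fresh odd-odd product $[y_j,y_1]$ to surface in the appropriate power $L^k$, while the adapted-basis relations $(1)$ simultaneously consume the odd elements. Counting how many odd strings are available against how many even elements must be generated, I anticipate a parity or cardinality clash showing that some $x_i$ can neither be obtained from $[y_j,y_1]$ nor from even-even products without collapsing $L^{k+1}=L^k$, contradicting nilpotency. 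The delicate step will be organizing this bookkeeping uniformly in $s$ (the number of odd Jordan blocks) rather than case-by-case, since with two odd generators the two generating strings interact through the superidentity in a way that the single-odd-generator case avoided.
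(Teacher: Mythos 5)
Your setup agrees with the paper's: take $y_1$ and $y_{m_1+1}$ as the two odd generators, assume the nilindex is $n+m$ (equivalently $\dim L^k=n+m-k$ for $2\le k\le n+m$), and look for a contradiction via the super-Leibniz identity. The gap is that the step which is supposed to produce the contradiction is only anticipated, never given: you defer to ``a parity or cardinality clash'' to be obtained by controlling $\dim(L^3)_0$ and $\dim L_0^3$ and then counting odd strings against even basis elements. That route cannot close the argument, because the counting information by itself is \emph{consistent} with nilindex $n+m$. The correct parity statement (which the paper proves first) is that any element of $L^{2t}\setminus L^{2t+1}$ is a product of an even number of odd generators and hence lies in $L_0$, so the one-dimensional drops of the descending series alternate strictly between $L_0$ and $L_1$; this forces $n=m-1$ or $n=m-2$ and pins down $L^3$, $L^4$, $L^5$ exactly, but it yields no contradiction on its own. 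Moreover, no structure theory of $L_0$ and no control of $\dim L_0^3$ is used or needed in this case, so the obstacle you identify as central is not the real one.

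What actually finishes the proof is a short coefficient computation that your plan does not contain. The alternation shows that the element dying at $L^4$ is an odd combination $B_1y_2+B_2y_{m_1+2}+B_3y_{m_1+m_2+1}$ and the element dying at $L^5$ is $x_2$. Let $\alpha_{1,2},\delta_{1,2}$ denote the coefficients of that combination in $[x_1,y_1]$ and $[x_1,y_{m_1+1}]$, and $\beta_{2,2},\gamma_{2,2}$ the coefficients of $x_2$ in the products of that combination with $y_1$ and $y_{m_1+1}$; since $L^3/L^4$ and $L^4/L^5$ must be generated by products with the generators, one gets $(\alpha_{1,2},\delta_{1,2})\neq(0,0)$ and $(\beta_{2,2},\gamma_{2,2})\neq(0,0)$. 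The identities $[x_1,[y_1,y_1]]=2[[x_1,y_1],y_1]$ and $[x_1,[y_{m_1+1},y_{m_1+1}]]=2[[x_1,y_{m_1+1}],y_{m_1+1}]$ force $\alpha_{1,2}\beta_{2,2}=0$ and $\delta_{1,2}\gamma_{2,2}=0$, leaving exactly two possibilities: $\alpha_{1,2}\gamma_{2,2}\neq 0$ with $\beta_{2,2}=\delta_{1,2}=0$, or $\beta_{2,2}\delta_{1,2}\neq 0$ with $\alpha_{1,2}=\gamma_{2,2}=0$. Finally the mixed identity $[[x_1,y_1],y_{m_1+1}]=[x_1,[y_1,y_{m_1+1}]]-[[x_1,y_{m_1+1}],y_1]$ gives $\alpha_{1,2}\gamma_{2,2}=-\beta_{2,2}\delta_{1,2}$, incompatible with either possibility. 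This simultaneous use of the two odd generators in a single identity is exactly the ``interaction'' you flag as the delicate step; but flagging it is where your proposal stops, so the core of the proof is missing.
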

\begin{proof}
 Since
both generators of the superalgebra $L$ lie in $L_1,$ they are
linear combinations of the elements $\{y_1, y_{m_1+1}, \dots,
y_{m_1+\dots+m_{s-1}+1}\}.$ Without loss of generality we may
assume that $y_1$ and $y_{m_1+1}$ are generators.

Let $L^{2t} = \{x_i, x_{i+1}, \dots, x_n, y_j, \dots, y_m\}$ for
some natural number $t$ and let $z \in L$ be an arbitrary element
such that $z \in L^{2t} \setminus L^{2t+1}.$ Then $z$ is obtained
 by the products of even number of generators. Hence $z \in L_0$
and $L^{2t+1} = \{x_{i+1}, \dots, x_n, y_j, \dots, y_m\}.$ In a
similar way, having $L^{2t+1} = \{x_{i+1}, \dots, x_n, y_j, \dots,
y_m\}$ we obtain $L^{2t+2} = \{x_{i+1}, \dots, x_n, y_{j+1},
\dots, y_m\}.$

From the above arguments we conclude that $n = m-1$ or $n = m-2$
and
$$L^3 = \{x_2, \dots, x_n, y_2, y_3, \dots, y_{m_1},
y_{m_1+2}, \dots, y_m\}.$$ Applying the above arguments we get
that an element of form $B_1y_2 + B_2y_{m_1+2} + B_3y_{m_1+m_2+1}$
disappears in $L^4.$ Moreover, there exist two elements $B'_1y_2 +
B'_2y_{m_1+2} + B'_3y_{m_1+m_2+1}$ and $B''_1y_2 + B''_2y_{m_1+2}
+ B''_3y_{m_1+m_2+1}$ which belong to $L^4,$ where
$$rank \left(\begin{array}{lll}
B_1&B_2&B_3\\
B'_1&B'_2&B'_3\\
B''_1&B''_2&B''_3\end{array}\right) =3.$$ Since $x_2$ does not
belong to $L^5$ then the elements $B'_1y_2 + B'_2y_{m_1+2} +
B'_3y_{m_1+m_2+1},$ $B''_1y_2 + B''_2y_{m_1+2} +
B''_3y_{m_1+m_2+1}$ lie in $L^5.$ Hence,  from the notations
$$[x_1, y_1] = \alpha_{1,2}(B_1y_2 + B_2y_{m_1+2} +  B_3y_{m_1+m_2+1})
+ \alpha_{1,m_1+2}(B'_1y_2 + B'_2y_{m_1+2} +  B'_3y_{m_1+m_2+1})
+$$$$+ \alpha_{1,m_1+m_2+1}(B''_1y_2 + B''_2y_{m_1+2} +
B''_3y_{m_1+m_2+1})+ \sum\limits_{j=3, j\neq m_1+2, m_1+m_2+1}^m
\alpha_{1,j}y_j.$$
$$[x_1, y_{m_1+1}] = \delta_{1,2}(B_1y_2 + B_2y_{m_1+2} +  B_3y_{m_1+m_2+1})
+ \delta_{1,m_1+2}(B'_1y_2 + B'_2y_{m_1+2} +  B'_3y_{m_1+m_2+1})
+$$$$+ \delta_{1,m_1+m_2+1}(B''_1y_2 + B''_2y_{m_1+2} +
B''_3y_{m_1+m_2+1})+ \sum\limits_{j=3, j\neq m_1+2, m_1+m_2+1}^m
\delta_{1,j}y_j,$$ we have $(\alpha_{1,2},\delta_{1,2}) \neq
(0,0).$

Similarly, from the notations
$$[B_1y_2 + B_2y_{m_1+2} +  B_3y_{m_1+m_2+1}, y_1 ] = \beta_{2,2}x_2 + \beta_{2,3}x_3 + \dots + \beta_{2,n}x_n,$$
$$[B_1y_2 + B_2y_{m_1+2} +  B_3y_{m_1+m_2+1}, y_{m_1+1}] = \gamma_{2,2}x_2 + \gamma_{2,3}x_3 + \dots +
\gamma_{2,n}x_n,$$ we obtain the condition $(\beta_{2,2},
\gamma_{2,2}) \neq (0, 0).$

Consider the product
$$[x_1, [y_1, y_1]] = 2 [[x_1, y_1], y_1] = 2\alpha_{1,2}[B_1y_2 + B_2y_{m_1+2} +  B_3y_{m_1+m_2+1},
y_1]+$$ $$ +2\alpha_{1,m_1+2}[B'_1y_2 +
B'_2y_{m_1+2}+B'_3y_{m_1+m_2+1}, y_1] +$$$$
2\alpha_{1,m_1+m_2+1}[B''_1y_2 + B''_2y_{m_1+2} +
B''_3y_{m_1+m_2+1},y_1]+$$
$$+2\sum\limits_{j=3, j\neq m_1+2, m_1+m_2+1}^m
\delta_{1,j}[y_j,y_1]=2 \alpha_{1,2}\beta_{2,2}x_2 +
\sum\limits_{i\geq 3}(*)x_i .$$ On the other hand,
$$[x_1, [y_1, y_1]] = [x_1, \beta_{1,1}x_1 +
\beta_{1,2}x_2 + \dots + \beta_{1,n}x_n] = \sum\limits_{i\geq
3}(*)x_i.$$

Comparing the coefficients at the basic elements in these
equations we obtain $\alpha_{1,2}\beta_{2,2} = 0.$

Analogously, considering the product $[x_1, [y_{m_1+1},
y_{m_1+1}]],$ we obtain $\delta_{1,2}\gamma_{2,2} = 0.$

From this equations and the conditions $(\beta_{2,2},
\gamma_{2,2}) \neq (0, 0),$ $(\alpha_{1,2},\delta_{1,2}) \neq
(0,0)$ we easily obtain that the solutions are
$\alpha_{1,2}\gamma_{2,2} \neq 0, \beta_{2,2} =\delta_{1,2} =0$ or
$\beta_{2,2}\delta_{1,2}\neq 0, \alpha_{1,2} = \gamma_{2,2}=0.$

Consider the following product
$$[[x_1, y_1], y_{m_1+1}] = [x_1,[ y_1, y_{m_1+1}]] - [[x_1, y_{m_1+1}], y_1]
=-\delta_{1,2}\beta_{2,2}x_2+ \sum\limits_{i\geq 3}(*)x_i.$$ On
the other hand, $$[[x_1, y_1], y_{m_1+1}] =
\alpha_{1,2}\gamma_{2,2}x_2+ \sum\limits_{i\geq 3}(*)x_i.$$
Comparing the coefficients of the basic elements in these
equations we obtain irregular equation $\alpha_{1,2}\gamma_{2,2} =
-\beta_{2,2}\delta_{1,2}.$ It is a contradiction with supposing
the nilindex of the superalgebra equal the $n+m.$ And the theorem
is proved. \end{proof}

Thus, the results of the Theorems \ref{t2}--\ref{t6} show that the
Leibniz superalgebras with nilindex $n+m$ ($m\neq 0$) are the
superalgebras mentioned in section 2. Hence, we completed the
classification of the Leibniz superalgebras with nilindex $n+m.$

\end{document}